\DeclareMathSymbol{\shortminus}{\mathbin}{AMSa}{"39} 
\theoremstyle{plain}
\newtheorem{lem}{Lemma}
\newtheorem{cor}{Corollary}
\newtheorem{theorem}{Theorem}
\newtheorem{con}{Conjecture}
\newtheorem{prp}{Proposition}
\newtheorem{rem}{Remark}
\newtheorem{definition}{Definition}
\newcommand{\avoidSymbol}{\mathsf{Av}}
\newcommand{\avoid}[3][]{\avoidSymbol^{#1}_{#2}(#3)}
\newcommand{\vincular}[1]{\overline{#1}}
\newcommand{\SSymbol}{S}
\newcommand{\PERMS}[2][]{\SSymbol^{#1}_{#2}}
\newcommand{\MSymbol}{M}
\newcommand{\MULTIPATS}[2][]{\MSymbol^{#1}_{#2}}
\newcommand{\CatNum}[1]{\mathcal{C}_{#1}}
\newcommand{\KatNum}[2]{\mathcal{C}_{#1,#2}}
\newcommand{\prefixTree}[1]{T_{#1}}
\newcommand{\redText}[1]{\textcolor{BrickRed}{#1}}
\newcommand{\blueText}[1]{\textcolor{BlueViolet}{#1}}
\newcommand{\greenText}[1]{\textcolor{PineGreen}{#1}}
\newcommand{\OEIS}[1]{\href{https://oeis.org/#1}{#1}}
\author[Downing et al.]
{
Emily Downing\affiliationmark{1}
\and Elizabeth J. Hartung\affiliationmark{1}
\and Cody Lucido\affiliationmark{1}
\and Aaron Williams\affiliationmark{2}
}
\affiliation{
  Massachusetts College of Liberal Arts, North Adams, USA\\
  Williams College, Williamstown, USA}
\title{Pattern Avoidance for Fibonacci Sequences using $k$-Regular Words}
\keywords{permutation patterns, pattern avoidance, regular words, uniform permutations, Stirling permutations, Fibonacci sequence, Jacobsthal sequence}
\begin{document}

\publicationdata{vol. 26:1, Permutation Patterns 2023}{2025}{11}{10.46298/dmtcs.12752}{2023-12-28; 2023-12-28; 2025-01-17; 2025-05-14}{2025-05-15}

\maketitle

\begin{abstract}
\vspace{10pt}
Two $k$-ary Fibonacci recurrences are $a_k(n) = a_k(n-1) + k \cdot a_k(n-2)$ and $b_k(n) = k \cdot b_k(n-1) + b_k(n-2)$.
We provide a simple and direct proof that $a_k(n)$ is the number of $k$-regular words over $[n] = \{1,2,\ldots,n\}$ that avoid patterns $\{121, 123, 132, 213\}$ when using base cases $a_k(0) = a_k(1) = 1$ for any $k \geq 1$.
This was previously proven by Kuba and Panholzer in the context of Wilf-equivalence for restricted Stirling permutations, and it creates Simion and Schmidt's classic result on the Fibonacci sequence when $k=1$, and the Jacobsthal sequence when $k=2$.
We complement this theorem by proving that $b_k(n)$ is the number of $k$-regular words over $[n]$ that avoid $\{122, 213\}$ with $b_k(0) = b_k(1) = 1$ for any~$k \geq 2$.
Finally, we prove that $|\avoid[2]{n}{\vincular{121}, 123, 132, 213}| = a_1(n)^2$ for $n \geq 0$.
That is, vincularizing the Stirling pattern in Kuba and Panholzer's Jacobsthal result gives the Fibonacci-squared numbers.
\end{abstract}


\section{Introduction}
\label{sec:intro}

The Fibonacci sequence is arguably the most famous integer sequence in mathematics, and the term \emph{generalized Fibonacci sequence} has been used to describe an increasingly wide variety of related sequences.  
Here we consider two families of generalizations involving a second parameter $k$.

\begin{definition} \label{def:Fibk}
The \emph{Fibonacci-$k$ numbers} are $a_k(n) = a_k(n-1) + k \cdot a_k(n-2)$ with $a_k(0) = a_k(1) = 1$.
\end{definition}

\begin{definition} \label{def:kFib}
The \emph{$k$-Fibonacci numbers} are $b_k(n) = k \cdot b_k(n-1) + b_k(n-2)$ with $b_k(0) = b_k(1) = 1$.
\end{definition}


We provide pattern avoidance results for the sequences $\{a_k(n)\}_{n \geq 0}$ and $\{b_k(n)\}_{n \geq 0}$.
The objects are \emph{$k$-regular words} meaning that each symbol in $[n] = \{1,2,\ldots,n\}$ appears $k$ times, and  no subword can have the same relative order as any of the patterns.
We let $\PERMS[k]{n}$ be the set of $k$-regular words over $[n]$ and $\avoid[k]{n}{\pi_1, \pi_2, \ldots, \pi_m} \subseteq \PERMS[k]{n}$ be the subset that avoids all $m$ patterns.
We focus on two families of~words.

\begin{definition} \label{def:FibWordsk}
The \emph{Fibonacci-$k$ words} of length $kn$ are the words in $\avoid[k]{n}{121, 123, 132, 213} \subseteq \PERMS[k]{n}$.
\end{definition}

\begin{definition} \label{def:kFibWords}
The \emph{$k$-Fibonacci words} of length $kn$ are the words in $\avoid[k]{n}{122, 213} \subseteq \PERMS[k]{n}$.
\end{definition}

We provide a simple proof that the Fibonacci-$k$ words are counted by the Fibonacci-$k$ numbers for~\mbox{$k \geq 1$}.
This was previously proven by Kuba and Panholzer (see the $\overline{C_2}$ case of Theorem 3 in the citation).


\begin{theorem}[\cite{kuba2012enumeration}] \label{thm:a}
$a_k(n) = |\avoid[k]{n}{121,123,132,213}|$ for all $k \geq 1$ and $n \geq 0$.
\end{theorem}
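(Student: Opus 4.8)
The plan is to prove the identity by induction on $n$, showing that the avoidance class $\avoid[k]{n}{121,123,132,213}$ satisfies the defining recurrence $a_k(n) = a_k(n-1) + k \cdot a_k(n-2)$ of Definition~\ref{def:Fibk}. The base cases $n=0$ and $n=1$ are immediate, since the only candidate words are the empty word and $1^k$, each contributing one avoider. For $n \geq 2$ I would analyze the position of the $k$ copies of the largest symbol $n$ in an avoider $w$.

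The first structural observation is that these $k$ copies must be consecutive: if a smaller symbol $x$ sat between two copies of $n$, then those three positions would read $n,x,n$ and realize the forbidden pattern $121$; since nothing exceeds $n$, only copies of $n$ can lie between two copies of $n$. Write the block of $n$'s as occupying positions $[s,s+k-1]$. The next observation is that all letters before the block are equal to one another: for two earlier positions carrying distinct values, appending a copy of $n$ yields either $123$ (if increasing) or $213$ (if decreasing), both forbidden. Calling this common value $c$, a symmetric argument applied to the triple $(c,n,w_l)$ shows every letter after the block is $\leq c$; since $w$ is $k$-regular on all of $[n]$ and the only letters exceeding $c$ are the $n$'s, every value in $[n-1]$ must be $\leq c$, which forces $c=n-1$ whenever the block does not begin at position $1$.

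I would then split into two cases according to whether $s=1$. If $s=1$, then $w = n^k u$, and one checks that prepending a consecutive block of the maximum never creates a forbidden triple, so $u$ ranges exactly over $\avoid[k]{n-1}{121,123,132,213}$, contributing the term $a_k(n-1)$. If $s>1$, the previous paragraph gives $c=n-1$, and deleting the $n$-block leaves an avoider on $[n-1]$ whose maximum $n-1$ is again consecutive; tracing this back shows that $w$ has the rigid form $(n-1)^a\, n^k\, (n-1)^{k-a}\, v'$ with $v' \in \avoid[k]{n-2}{121,123,132,213}$ and $a \in \{1,\ldots,k\}$. This yields the term $k \cdot a_k(n-2)$, where the factor $k$ comes precisely from the choice of how many copies of $n-1$ precede the block.

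The main obstacle I anticipate is the $s>1$ case, where both directions require care. First, one must confirm that this split of the $(n-1)$'s is the only freedom: no smaller symbol can separate the two groups of $(n-1)$'s, and nothing constrains $a$ beyond the range $\{1,\ldots,k\}$. Second, one must verify that every word of this prescribed form is genuinely an avoider for arbitrary $v'$. Checking the latter reduces to confirming that triples straddling the two blocks, such as $(n-1,n,n-1)$ or $(n-1,n-1,n)$, never realize any of the four patterns, which is exactly where the equality pattern $121$ and the three order patterns $123,132,213$ must be reconciled simultaneously rather than one at a time.
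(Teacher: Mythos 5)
Your proof has a fatal error at its first structural step: you have misread the pattern $121$. A subword $n\,x\,n$ with $x<n$ is order-isomorphic to $212$, which is \emph{not} among the forbidden patterns; the pattern $121$ is realized by $x\,y\,x$ with $y>x$, i.e.\ two copies of a \emph{smaller} value with a larger value between them. Consequently the $k$ copies of $n$ need not be consecutive: for instance $322311 \in \avoid[2]{3}{121,123,132,213}$ (it appears in the paper's list \eqref{eq:a23}) even though its two $3$'s are separated by $22$. Conversely, your claimed normal form $(n{-}1)^a\, n^k\, (n{-}1)^{k-a}\, v'$ with $1 \leq a \leq k-1$ contains the subword $(n{-}1)\,n\,(n{-}1)$, which \emph{is} the pattern $121$: taking $k=2$, $n=3$, $a=1$ your construction produces $233211$, which the paper explicitly exhibits as a non-avoider. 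So in the case $s>1$ your decomposition both misses genuine avoiders and generates forbidden words; the final count $a_k(n-1)+k\cdot a_k(n-2)$ comes out numerically right only by coincidence (there are $k$ choices on each side). Tellingly, the triple $(n{-}1,n,n{-}1)$ that you flag for verification at the end is exactly an occurrence of $121$, so the check you anticipate would fail rather than succeed.

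The correct decomposition, which the paper establishes via Lemma \ref{lem:order} and Proposition \ref{prp:setk}, is dual to yours: it is the copies of $n{-}1$ that form a consecutive block, with the copies of $n$ split around it. If $\alpha$ does not begin with $n^k$, the first non-$n$ symbol must be $n{-}1$ (your argument for this, and for the ``all earlier letters are equal'' and ``$c=n{-}1$'' steps, is sound and parallels the paper's Lemma \ref{lem:order}); then the remaining copies of $n{-}1$ must follow immediately, since an intervening $n$ would create $(n{-}1)\,n\,(n{-}1) = 121$ and an intervening smaller symbol is barred by Lemma \ref{lem:order}. This yields $\alpha = n^k\gamma$ with $\gamma \in \avoid[k]{n-1}{121,123,132,213}$, or $\alpha = n^b\,(n{-}1)^k\,n^{k-b}\,\gamma$ with $0 \leq b < k$ and $\gamma \in \avoid[k]{n-2}{121,123,132,213}$, the $k$ choices of $b$ giving the term $k \cdot a_k(n-2)$. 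Your induction skeleton and base cases are fine, but the block analysis in the $s>1$ case must be redone with the roles of $n$ and $n{-}1$ interchanged.
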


For example, the Fibonacci-$2$ numbers $a_2(n)$ create \textsc{Oeis}\footnote{See the Online Encyclopedia of Integer Sequences (\textsc{Oeis}) \cite{OEIS} for all sequence references.} \OEIS{A001045} given by $1, 1, 3, 5, 11, 21, 43,  \ldots$,
and also known as the \emph{Jacobsthal sequence}.
Its first four terms count the Fibonacci-$2$ words in \eqref{eq:a20}--\eqref{eq:a23}.
\begin{align}
1 &= |\avoid[2]{0}{121,123,132,213}| = |\{ \epsilon \}| \label{eq:a20} \\ 
1 &= |\avoid[2]{1}{121,123,132,213}| = |\{11\}| \label{eq:a21} \\
3 &= |\avoid[2]{2}{121,123,132,213}| = |\{1122, 2112, 2211\}| \label{eq:a22} \\  
5 &= |\avoid[2]{3}{121,123,132,213}| = |\{223311, 322311, 331122, 332112, 332211\}| \label{eq:a23}
\end{align}
Note that $233211 \in \PERMS[2]{3}$ since it is a $2$-regular word over $[3]$.
However, $\underline{23}3\underline{2}11 \notin \avoid[2]{3}{121,123,132,213}$ since its underlined subword $232$ is order isomorphic to the pattern $121$.
Hence, it does not appear in~\eqref{eq:a23}. 

Kuba and Panholzer previously proved Theorem \ref{thm:a} in a much broader context involving Stirling permutations (see Section \ref{sec:intro_Stirling}).
Here we provide a simple and direct combinatorial proof.
We also complement their result for Fibonacci-$k$ words with a new pattern avoidance result for $k$-Fibonacci words.
More specifically, we prove that the $k$-Fibonacci words are counted by the $k$-Fibonacci numbers for~$k \geq 2$.

\begin{theorem} \label{thm:b}
$b_k(n) = |\avoid[k]{n}{122, 213}|$ for all $k \geq 2$ and $n \geq 0$.
\end{theorem}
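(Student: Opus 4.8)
The plan is to prove a slightly more general statement by induction, using a two‑parameter family of ``nearly regular'' words. For $m \geq 1$ and $1 \leq j \leq k$, let $g(m,j)$ denote the number of words avoiding $\{122,213\}$ in which the largest symbol $m$ occurs exactly $j$ times while each smaller symbol $1,2,\ldots,m-1$ occurs exactly $k$ times; set $g(0,j)=1$. Since a $k$-regular word over $[n]$ is precisely a word of this type with $m=n$ and $j=k$, we have $|\avoid[k]{n}{122,213}| = g(n,k)$, so it suffices to show $g(m,j)=b_k(m)$ for all $m\geq 0$ and all $1\leq j\leq k$. The engine of the proof is a structural decomposition showing that $g(m,j)$ satisfies a recurrence whose right‑hand side does not depend on $j$ and matches $b_k(m)=k\cdot b_k(m-1)+b_k(m-2)$.

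First I would establish the key structural lemma: every such word $W$ factors uniquely as
\[
  W = m^{\,j-1}\,(m-1)^{b}\,m\,V,
\]
where $0\leq b\leq k$ and $V$ is a $\{122,213\}$-avoiding word on the remaining symbols. The forced shape of the prefix comes from combining the two forbidden patterns with $k\geq 2$: because the maximum $m$ has at least two copies, any smaller symbol placed before the $(j{-}1)$st copy of $m$ would sit in front of two copies of $m$ and create a $122$, so the first $j-1$ copies of $m$ must precede every smaller symbol; avoiding $213$ then forces the symbols before the last copy of $m$ to be weakly increasing; and avoiding $122$ forces each such symbol to equal $m-1$, since a symbol $a<m-1$ would precede all $k\geq 2$ copies of $a+1$. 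This is exactly the step that requires $k\geq 2$ and uses both patterns simultaneously.

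Next I would verify that the prefix $m^{\,j-1}(m-1)^{b}\,m$ is \emph{inert}: appending it never creates a forbidden pattern with the suffix $V$, so $W$ avoids $\{122,213\}$ if and only if $V$ does. The underlying reason is that the prefix contains only the two largest symbols $m$ and $m-1$, and in both $122$ and $213$ the leftmost entry is not the largest; a short case analysis over how a length‑$3$ occurrence can split between the prefix and $V$ (using that $V$ contains no symbol exceeding $m-1$) rules out every cross‑pattern. Consequently the factorization is a bijection between the words counted by $g(m,j)$ and the pairs $(b,V)$, where $V$ is counted by $g(m-1,k-b)$ when $b<k$ and by $g(m-2,k)$ when $b=k$ (in which case no copy of $m-1$ remains). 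Summing over $b$ and reindexing gives, for $m\geq 2$,
\[
  g(m,j) \;=\; \sum_{j'=1}^{k} g(m-1,j') \;+\; g(m-2,k),
\]
whose right‑hand side is independent of $j$.

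Finally I would close the induction on $m$. The base cases $g(0,j)=g(1,j)=1=b_k(0)=b_k(1)$ are immediate, and the displayed recurrence together with the inductive hypothesis $g(m',j')=b_k(m')$ for all $m'<m$ yields $g(m,j)=k\cdot b_k(m-1)+b_k(m-2)=b_k(m)$. Specializing to $m=n$ and $j=k$ then gives $|\avoid[k]{n}{122,213}|=b_k(n)$. I expect the main obstacle to be the structural lemma itself — pinning down that the entire prefix up to and including the last copy of $m$ is forced and inert — since everything after it reduces cleanly by induction; the inertness case analysis is routine but must be carried out carefully so that no cross‑pattern is overlooked.
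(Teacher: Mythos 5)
Your proof is correct, and it rests on the same two combinatorial facts as the paper's, but it organizes the induction in a genuinely different way. The paper's route (its Lemmas \ref{consecutive2} and \ref{order2}, then Proposition \ref{prp:set}) never leaves the class of $k$-regular words: it decomposes $\beta$ either as $n^{k-1}\alpha'$ with $\alpha'=\mathsf{insert}(\alpha,n,i+1)$ for a \emph{full} avoider $\alpha\in\avoid[k]{n-1}{122,213}$ --- legitimate because every such $\alpha$ is forced to begin with $(n-1)^{k-1}$, again by the $122$-lemma, so the $k$ insertion slots are well defined and stay inside the class --- or as $n^{k-1}(n-1)^k n\gamma$ with $\gamma\in\avoid[k]{n-2}{122,213}$, giving $k\cdot b_k(n-1)+b_k(n-2)$ in a single step. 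You instead strengthen the induction hypothesis to the two-parameter family $g(m,j)$ of near-regular words (top symbol appearing $j\le k$ times, everything below fully $k$-regular), peel off the forced block $m^{j-1}(m-1)^b m$, and recurse on a genuinely non-regular suffix, so your recurrence is $g(m,j)=\sum_{j'=1}^{k}g(m-1,j')+g(m-2,k)$ and the factor $k$ emerges from the $j$-independence of $g$ rather than from counting insertion slots. Your structural lemma is sound, including the two places where it matters: the first $j-1$ copies of $m$ are forced to the front by $122$; and a symbol $a<m-1$ before the last copy of $m$ would (by the $213$-forced weak increase) precede all $k\ge 2$ copies of $a+1$ and create $122$, which is exactly where $k\ge 2$ enters, consistent with the theorem failing for $k=1$. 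Your inertness check also goes through, since the prefix involves only the two largest values and no forbidden pattern begins with its largest symbol. What each approach buys: the paper's insertion bijection keeps every object in the original class and makes the recurrence immediate, at the price of the auxiliary observation that avoiders over $[n-1]$ begin with $(n-1)^{k-1}$; your version handles the partially consumed top symbol uniformly and proves the slightly stronger statement that the count is independent of how many copies of the maximum remain, at the price of carrying the extra parameter $j$ throughout.
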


For example, the $2$-Fibonacci number sequence $\{b_n\}_{n \geq 0}$ is $1, 1, 3, 7, 17, 41, 99,239, 577, \ldots$ (\OEIS{A001333}).
Its first four terms count the $2$-Fibonacci words over $[n]$ for $n = 0,1,2,3$ as shown in~\eqref{eq:b20}--\eqref{eq:b23}.
\begin{align}
1 &= |\avoid[2]{0}{122,213}| = |\{ \epsilon \}| \label{eq:b20} \\
1 &= |\avoid[2]{1}{122,213}| = |\{11\}| \label{eq:b21} \\
3 &= |\avoid[2]{2}{122,213}| = |\{2112, 2121, 2211\}| \label{eq:b22} \\
7 &= |\avoid[2]{3}{122,213}| = |\{322311,332112,323112,332211,323211,332121,323121\}| \label{eq:b23}
\end{align}



Figure \ref{fig:proofab} provides Fibonacci words of both types for $k=3$, and illustrates why Theorems \ref{thm:a} and \ref{thm:b} hold.
Readers who are ready to delve into the proofs of our two main results can safely skip ahead to Section \ref{sec:Fibonaccik}.
The remainder of this section further contextualizes Theorems \ref{thm:a} and \ref{thm:b} and introduces one more result.

\subsection{Classic Pattern Avoidance Results: Fibonacci and Catalan} 
\label{sec:intro_classic}

Theorem \ref{thm:a} provides a $k$-ary generalization of the classic pattern avoidance result by Simion and Schmidt involving permutations and the Fibonacci numbers.
Their statement of the result is provided below.

\begin{theorem}[Proposition 15 in \cite{simion1985restricted}] \label{thm:SS}
For every $n \geq 1$, 
\begin{equation} \label{eq:SS}
    |\avoid{n}{123, 132, 213}| = F_{n+1}
\end{equation}
where $\{F_n\}_{n \geq 0}$ is the Fibonacci sequence, initialized by $F_0 = 0$, $F_1 = 1$.
\end{theorem}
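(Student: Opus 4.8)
The plan is to set $g(n) = |\avoid{n}{123,132,213}|$ and to show that $g$ satisfies the Fibonacci recurrence $g(n) = g(n-1) + g(n-2)$ with base cases $g(0) = g(1) = 1$; since $F_0 = 0$ and $F_1 = 1$ force $F_{n+1} = F_n + F_{n-1}$ with $F_1 = F_2 = 1$, an immediate induction then gives $g(n) = F_{n+1}$. The entire argument hinges on locating the largest value $n$ inside an avoiding permutation $\pi = \pi_1 \pi_2 \cdots \pi_n$.

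First I would argue that $n$ must occupy position $1$ or position $2$. Indeed, if $n = \pi_p$ with $p \geq 3$, then any two earlier entries $\pi_i, \pi_j$ with $i < j < p$ form a forbidden triple together with $n$: when $\pi_i < \pi_j$ the triple $\pi_i\,\pi_j\,n$ is order isomorphic to $123$, and when $\pi_i > \pi_j$ it is order isomorphic to $213$. Either way avoidance is violated, so at most one entry can precede $n$.

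Next I would split into two cases. If $\pi_1 = n$, deleting this leading $n$ leaves a permutation of $[n-1]$, and because $n$ is the global maximum it can only play the role ``$3$'' in a length-three pattern, yielding $312$ or $321$ and never a forbidden pattern; the deletion is therefore a bijection onto $\avoid{n-1}{123,132,213}$ and contributes $g(n-1)$. If instead $n = \pi_2$ with $\pi_1 = c < n$, then for every later entry $\pi_j$ ($j \geq 3$) the triple $c\,n\,\pi_j$ must avoid $132$, which forces $c > \pi_j$; hence $c$ exceeds all remaining values and must equal $n-1$. Deleting the prefix $(n-1)\,n$ then gives a bijection onto $\avoid{n-2}{123,132,213}$ and contributes $g(n-2)$. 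Summing the two cases yields $g(n) = g(n-1) + g(n-2)$.

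The only real subtlety, and the step I expect to require the most care, is verifying that these deletions are genuinely bijective, i.e.\ that reattaching the prefix never manufactures a new forbidden pattern. One must check that the single value $n$ (Case A) or the pair $(n-1)\,n$ (Case B) only ever occupies the largest slot(s) of any triple it joins, so that every induced pattern beginning with $n$, or with $n-1$, falls among the permitted set $\{231, 312, 321\}$; once this bookkeeping is complete the base cases finish the proof. As a remark, the same case analysis packages into a clean bijection between avoiding permutations and compositions of $n$ into parts $1$ and $2$ (peel off either a singleton block $n$ or a block $(n-1)\,n$ and recurse), which re-derives the Fibonacci count directly.
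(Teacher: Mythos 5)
Your proof is correct and takes essentially the same approach as the paper: although the paper cites Simion and Schmidt for Theorem \ref{thm:SS} itself, its proof of the $k$-regular generalization (Theorem \ref{thm:a}) uses exactly your decomposition, with Lemma \ref{lem:order} playing the role of your three-pattern argument (the triples forming $213$, $123$, $132$ force any symbol preceding $n$ to equal $n-1$) and Proposition \ref{prp:setk} splitting avoiders into those beginning with $n$ and those beginning with $n^b(n{-}1)^k n^{k-b}$, which at $k=1$ reduces verbatim to your two cases $n\gamma$ and $(n{-}1)\,n\,\gamma$ yielding $g(n)=g(n-1)+g(n-2)$. Your bijectivity bookkeeping (prepended maxima can only occupy the largest slots of any triple, giving patterns in $\{231,312,321\}$) mirrors the second half of the paper's Proposition \ref{prp:setk} proof, so the two arguments coincide in substance.
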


When comparing Theorems \ref{thm:a} and \ref{thm:SS}, note that permutations are $1$-regular words, and they all avoid~$121$.
Thus, $\avoid{n}{123, 132, 213} = \avoid[1]{n}{123, 132, 213} = \avoid[1]{n}{121, 123, 132, 213}$.
In other words, the $121$ pattern in Theorem \ref{thm:a} is hidden in the special case of $k=1$ in Theorem \ref{thm:SS}.
Also note that \eqref{eq:SS} has off-by-one indexing (i.e., subscript $n$ versus $n+1$) and it holds for $n=0$ despite the stated $n \geq 1$ bound.

An even earlier result on pattern avoiding permutations is stated below.

\begin{theorem}[\cite{macmahon1915combinatory1} and \cite{knuth1968art}] \label{thm:MK}
For every $n \geq 0$, 
\begin{equation} \label{eq:MK}
    |\avoid{n}{123}| = \CatNum{n} \text{\qquad and \qquad} |\avoid{n}{213}| = \CatNum{n}
\end{equation}
where $\{\CatNum{n}\}_{n \geq 0}$ is the Catalan sequence starting with $\CatNum{0} = 1$ and $\CatNum{1} = 1$.
\end{theorem}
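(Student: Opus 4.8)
The plan is to show that each of the two counts in \eqref{eq:MK} satisfies the Catalan recurrence
\begin{equation*}
\CatNum{0} = 1, \qquad \CatNum{n} = \sum_{m=1}^{n} \CatNum{m-1}\,\CatNum{n-m} \quad (n \geq 1),
\end{equation*}
which together with $\CatNum{1} = 1$ pins down the Catalan numbers. The two halves require genuinely different arguments, since $123$ and $213$ lie in different symmetry classes under reversal, complementation, and inversion; this asymmetry is the main obstacle, as the easy half does not transfer to the hard half by any single trivial bijection.

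For the $213$ count I would decompose a permutation $\sigma \in \avoid{n}{213}$ according to the position $m$ of its smallest entry. If $\sigma_m = 1$, then for every $i < m < \ell$ the triple $(\sigma_i, \sigma_m, \sigma_\ell)$ has its smallest entry in the middle, so it is order-isomorphic to $213$ exactly when $\sigma_i < \sigma_\ell$. Avoidance therefore forces every entry left of position $m$ to exceed every entry right of it: the left block uses the top $m-1$ values $\{n-m+2, \dots, n\}$ and the right block uses $\{2, \dots, n-m+1\}$. A short check confirms that no $213$ pattern can straddle the two blocks, so $\sigma$ avoids $213$ iff each block does; each block is order-isomorphic to a shorter $213$-avoider (of lengths $m-1$ and $n-m$), and conversely any such pair glues back together. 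Summing over $m$ gives the Catalan recurrence, and the base cases $|\avoid{0}{213}| = |\avoid{1}{213}| = 1$ match $\CatNum{0} = \CatNum{1} = 1$.

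For the $123$ count the position-of-$1$ decomposition is no longer clean, because blocks on either side of the smallest entry can interact. Instead I would use the structural fact that $\sigma$ avoids $123$ iff it contains no increasing subsequence of length three, equivalently iff $\sigma$ splits into two decreasing subsequences: its left-to-right minima (decreasing by definition) and the remaining entries, which must also be decreasing, since any ascent among non-minima together with a preceding smaller minimum would create a $123$. This rigidity lets me encode $\sigma$ as a lattice path whose step sequence is dictated by the interleaving of these two decreasing subsequences, with the values at the non-minimum positions forced by the decreasing constraint. I would then verify that the admissible paths are exactly the Dyck paths of semilength $n$, which are counted by $\CatNum{n}$, yielding $|\avoid{n}{123}| = \CatNum{n}$.

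The step I expect to be most delicate is checking that this encoding is a genuine bijection onto Dyck paths: I must confirm both that the non-minimum entries are forced into a unique decreasing arrangement compatible with $123$-avoidance and that the associated path satisfies the nonnegativity (Dyck) condition and balances out at length $2n$. Should a self-contained path bijection prove cumbersome to state, the fallback is to exhibit a direct length-preserving bijection between $\avoid{n}{123}$ and $\avoid{n}{213}$ and then invoke the recurrence already established for the latter.
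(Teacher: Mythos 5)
The paper offers no proof of this theorem: it is stated as a classical result and attributed to \cite{macmahon1915combinatory1} and \cite{knuth1968art}, so there is no internal argument to compare against and your proposal must stand on its own. Your $213$ half does: decomposing by the position $m$ of the entry $1$, the forcing argument (any $\sigma_i < \sigma_\ell$ with $i < m < \ell$ makes $(\sigma_i, 1, \sigma_\ell)$ a $213$ pattern) is correct, the straddling check works because every candidate ``$3$'' right of position $m$ is smaller than every candidate ``$2$'' left of it, and the resulting recurrence $\sum_{m=1}^{n}\CatNum{m-1}\CatNum{n-m}$ with base cases $|\avoid{0}{213}| = |\avoid{1}{213}| = 1$ pins down $\CatNum{n}$. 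This is the standard position-of-minimum decomposition and it is complete as written.

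The $123$ half has a genuine under-specification. The structural fact is right (left-to-right minima decrease, and the non-minima must also decrease, since an ascent among non-minima together with an earlier smaller minimum yields a $123$), and given the positions \emph{and values} of the minima the non-minimum entries are indeed forced into a unique decreasing arrangement. But the interleaving of the two decreasing subsequences alone does not determine the permutation, so a path whose steps record only which positions are minima cannot be injective: for $n=3$, both $213$ and $312$ are $123$-avoiders with left-to-right minima exactly at positions $1$ and $2$ and a single non-minimum at position $3$, yet they are distinct. The delicate point is therefore not the one you flagged (uniqueness of the non-minimum arrangement, which holds) but that your encoding must additionally record the \emph{values} of the minima --- equivalently the sizes of the successive drops --- which is precisely what the classical bijection to Dyck paths of semilength $n$ does; with that repair the nonnegativity check is routine. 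Alternatively, your stated fallback is sound and is the cleanest fix: the bijection of \cite{simion1985restricted} between $\avoid{n}{123}$ and $\avoid{n}{132}$ (which fixes the left-to-right minima and rearranges the rest), composed with reverse-complement (which carries $132$-avoiders to $213$-avoiders), transfers the recurrence you already established for $\avoid{n}{213}$ to $\avoid{n}{123}$.
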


Permutations avoid $122$, so $\avoid{n}{213} = \avoid[1]{n}{122,213}$.
Thus, the patterns in Theorem \ref{thm:b} are equivalent to those in Theorem \ref{thm:MK} when $k=1$.
But Theorem \ref{thm:MK} is not a special case of Theorem \ref{thm:b}, as our new result only holds when $k \geq 2$.
This gap makes sense as the Catalan numbers do not follow a simple two term recurrence like $a_k(n)$ or $b_k(n)$, and the $1$-Fibonacci words $\avoid[1]{n}{122, 213} = \avoid{n}{213}$ are Catalan words.

\subsection{Base Cases: \texorpdfstring{$(0,1)$}{(0,1)}-Based or \texorpdfstring{$(1,1)$}{(1,1)}-Based} 
\label{sec:intro_base}

Simion and Schmidt's result uses the customary base cases of $F_0 = 0$ and $F_1 = 1$ for Fibonacci~numbers.
However, we use base cases of $a_k(0) = b_k(0) = 1$ and $a_k(1) = b_k(1) = 1$ in our $k$-ary generalizations.

The distinction between \emph{$(0,1)$-based} and \emph{$(1,1)$-based} sequences can be dismissed as cosmetic for the Fibonacci-$k$ recurrence $a_k(n) = a_k(n-1) + k \cdot a_k(n-2)$ since the resulting sequences $0,1,1,k,\ldots$ and $1,1,k,\ldots$ coincide from the first $1$.
The resulting sequences are indeed \emph{shifted} by one index relative to each other, which is important if we want to be sensitive to the off-by-one indexing issue found in~\eqref{eq:SS}.

In contrast, the $n=0$ term is critical to the $k$-Fibonacci recurrence $b_k(n) = k \cdot b_k(n-1) + b_k(n-2)$ since the $(0,1)$-based sequence $0,1,k,k^2+1,\ldots$ and the $(1,1)$-based sequence $1,1,k+1,k^2+k+1,\ldots$ diverge if $k \geq 2$.
In particular, the well-known \emph{Pell sequence} $\{P(n)\}_{n \geq 0}$ (\OEIS{A000129}) follows our $b_2(n)$ recurrence with  $P(0)=0$ and $P(1)=1$, so it is not covered by our Theorem \ref{thm:b} as it is $(0,1)$-based.
\cite{hartung2024regular} found pattern avoidance results for $(1,b)$-based Pell sequences with $b \geq 2$.

With apologies to the Pell numbers, we suggest that $(1,1)$-based sequences are more natural, at least in the context of pattern avoidance.
This is due to the unique word of length $n=0$ that avoids all patterns, namely the empty word $\epsilon$.
So an $n=0$ term of $0$ mistakenly counts a singleton set $\{ \epsilon \}$ as an empty set~$\emptyset$.


\subsection{Four Parameter Generalizations beyond \texorpdfstring{$k$}{k}-Fibonacci and Fibonacci-\texorpdfstring{$k$}{k}}
\label{sec:intro_four}

As mentioned earlier, there are many different notions of \emph{generalized Fibonacci numbers}.
To better discuss similar sequences, it is helpful to define the \emph{$(b_0, b_1)$-based $k_1$-Fibonacci-$k_2$ recurrence} as follows,
\begin{equation}
f(n) = k_1 \cdot f(n-1) + k_2 \cdot f(n-2) \text{ with } f(0) = b_0 \text{ and } f(1) = b_1. \label{eq:generalized} 
\end{equation} 
The resulting sequence $\{f(n)\}_{n \geq 0}$ is the \emph{$(b_0, b_1)$-based $k_1$-Fibonacci-$k_2$ sequence}.
When using these terms, we omit $k_1$ and/or $k_2$ when they are equal to $1$.
Thus, our Fibonacci-$k$ numbers can be described as the $(1,1)$-based Fibonacci-$k$ numbers, or as shifted $(0,1)$-based Fibonacci-$k$ numbers (as per Section \ref{sec:intro_base}).
Likewise, our $k$-Fibonacci numbers can be described as $(1,1)$-based $k$-Fibonacci numbers outside of this paper, and they are not equivalent to $(0,1)$-based $k$-Fibonacci numbers when $k \geq 2$.




Table \ref{tab:not} collects previously studied $(b_0, b_1)$-based $k_1$-Fibonacci-$k_2$ sequences that are not covered by our work. 
For example, the aforementioned Pell sequence is the $(0,1)$-based $2$-Fibonacci sequence using our terminology.
More generally, the \emph{$k$th-Fibonacci sequences}%
\footnote{\setlength\fboxsep{0pt}Somewhat confusingly, the title of this well-cited paper is \emph{On the Fibonacci $k$-numbers} and it contains a section titled \emph{$k$-Fibonacci numbers}, but the term used throughout the paper is \emph{$k$th Fibonacci sequence}.  We use the latter in Table \ref{tab:not}.}
in \cite{falcon2007fibonacci} are the $(0,1)$-based $k$-Fibonacci sequences from the previous paragraph.
When preparing Table \ref{tab:not} we found the summary in \cite{panwar2021note} to be very helpful.
Also note that the same four parameters are used in \cite{gupta2012generalized} with their \emph{generalized Fibonacci sequences} being $(a,b)$-based $p$-Fibonacci-$q$ sequences. 


\begin{table}[ht]
\centering
\small
\begin{tabular}{|@{\;\;}c@{\;\;}c@{\;\;\;}c@{\;\;}r@{\;\;\;}c@{\;\;}c@{\;\;}c@{\;\;}|} \hline
\rowcolor{gray!20}
\multicolumn{1}{|c}{Name} & Bases & $k_1$ & $k_2$ & Numbers & \textsc{Oeis}  & \multicolumn{1}{c@{\;\;}|}{Pattern Avoidance} \\ \hline
Pell & \multirow{2}{*}{$(0,1)$} & \multirow{2}{*}{$2$} & \multirow{2}{*}{$1$}  & \multirow{2}{*}{$0, 1, 2, 5, 12, 29, 70, 169, \ldots$} & \multirow{2}{*}{A000129} & $\avoid{n}{123,2143,3214}$ \\
($2{\text{nd}}$ Fibonacci)                 &         &  &         &                                                       &         & or $\avoid{n}{2\dot{1},231,4321}$ \\
$3{\text{rd}}$ Fibonacci    & $(0,1)$ & $3$ & $1$  & $0, 1, 3, 10, 33, 109, 360, 1189, \ldots$    &\OEIS{A006190} &  \\ 
$4{\text{th}}$ Fibonacci    & $(0,1)$ & $4$ & $1$  & $0, 1, 4, 17, 72, 305, 1292, 5473, \ldots$    &\OEIS{A001076} &  \\ 
$5{\text{th}}$ Fibonacci    & $(0,1)$ & $5$ & $1$  & $0, 1, 5, 26, 135, 701, 3640, 18901, \ldots$    &  &  \\ 
$6{\text{th}}$ Fibonacci    & $(0,1)$ & $6$ & $1$  & $0, 1, 6, 37, 228, 1405, 8658, 53353, \ldots$    &\OEIS{A005668} &  \\ 
$7{\text{th}}$ Fibonacci    & $(0,1)$ & $7$ & $1$  & $0, 1, 7, 50, 357, 2549, 18200, 129949, \ldots$    &  &  \\ 
$8{\text{th}}$ Fibonacci    & $(0,1)$ & $8$ & $1$  & $0, 1, 8, 65, 528, 4289, 34840, 283009, \ldots$    &  &  \\ 
$9{\text{th}}$ Fibonacci    & $(0,1)$ & $9$ & $1$  & $0, 1, 9, 82, 747, 6805, 61992, 564733, \ldots$    &\OEIS{A099371} &  \\ 
Mersenne         & $(0,1)$ & $3$ & ${-}2$ & $0, 1, 3, 7, 15, 31, 63, 127, \ldots$    &\OEIS{A000225} & $\avoid{n}{2\dot{1},132,213}$ \\ 
Lucas            & $(2,1)$ & $1$ & $1$  & $2, 1, 3, 4, 7, 11, 18, 29, \ldots$ &\OEIS{A000032} & $d_n(2413)$ \\ 
Jacobsthal-Lucas & $(2,1)$ & $1$ & $2$  & $2, 1, 5, 7, 17, 31, 65, 127,\ldots$ &\OEIS{A014551} & \\
Pell-Lucas       & $(2,2)$ & $2$ & $1$  & $2, 2, 6, 14, 34, 82, 198, 478, \ldots$ &\OEIS{A002203} & \\ 
\hline
\end{tabular}
\caption{Previously studied $(b_0, b_1)$-based $k_1$-Fibonacci-$k_2$ sequences that are not $a_k(n)$ or $b_k(n)$.
Each row contains a sequence that is not covered by our results, along with its alternate name.
For example, the Mersenne numbers can also be described as the $(0,1)$-based $3$-Fibonacci-$(-2)$ numbers. 
Avoidance results are due to \cite{barcucci2006fibonacci}, \cite{baril2011classical} (dotted), and \cite{cratty2016pattern} (double list $d_n$).
}
\label{tab:not}
\end{table}

\subsection{Pattern Avoidance with Stirling Words}
\label{sec:intro_Stirling}

A \emph{Stirling permutation} is typically defined as a word with two copies of each value in $[n]$ and the property that for each $i \in [n]$, the values between the two copies of $i$ are larger than $i$.
In other words, it is a $2$-regular word over $[n]$ that avoids $212$.
The Stirling permutations with $n=3$ are given in~\eqref{eq:Stirling3}.
\begin{align} \label{eq:Stirling3}
\begin{split}
\avoid[2]{3}{212} = \{
& 112233,
112332,
113322,
122133,
122331,
123321,
133122,
133221, \\
& 221133,
221331,
223311,
233211,
331122,
331221,
332211\}
\end{split}
\end{align}
Famously, \cite{gessel1978stirling} proved that $|\avoid[2]{n}{212}| = (2n-1)!!$.
For example, \eqref{eq:Stirling3} verifies that there are $5!! = 5 \cdot 3 \cdot 1 = 15$ such words for $n=3$.
Generalized \emph{$k$-Stirling words} are the words in $\avoid[k]{n}{212}$ using our notation, and were studied under the name \emph{$r$-multipermutations} in \cite{park1994r}.

\cite{kuba2012enumeration} investigated Wilf-equivalence for $k$-Stirling words that avoid a subset of patterns in $\PERMS{3}$.
They proved that there are five $\mathbb{N}$-Wilf classes%
\footnote{\emph{$\mathbb{N}$-Wilf-equivalence} means Wilf-equivalence for all $k \geq 1$.
The other four classes are $0$, $n+k-1$, $1+k \cdot (n-1)$, and $\binom{n+k-1}{k}$.} 
that avoid three such patterns.
That is, there are five counting functions parameterized by $k$ for $|\avoid[k]{n}{212, \alpha, \beta, \gamma}|$ with distinct $\alpha, \beta, \gamma \in \PERMS{3}$.
In particular, their class $\overline{C}_2$ avoids $\lambda = \{312, 231, 321\}$ and is identical to $\avoid[2]{k}{121,123,132,213}$ used in our Theorem \ref{thm:a}.
This is because $k$-regular word classes respect the symmetries of the square, and complementing our patterns results in $212$ and $\lambda$.
Their proof of this case also uses the same recursive decomposition as our proof of Theorem \ref{thm:a}.
However, its scope and approach obfuscate the simplicity of this important special case. 
For example, the proof of their three-pattern result uses their two-pattern result, so readers must process the special case in stages.
The use of generating functions and partial fraction expansions also belie the simple structure of the strings.
Their representative patterns for $\overline{C}_2$ also requires scaling-up subwords before prefixing $1$s and $2$s (cf., Figure \ref{fig:proofab_Fibk}).
For these reasons, our proof of Theorem \ref{thm:a} is somewhat simpler, even though the same structure is used.

Another result in \cite{kuba2012enumeration} involves the \emph{$k$-Catalan numbers} $\KatNum{k}{n}$, which enumerate $k$-ary trees with $n$ nodes 
and generalize the standard Catalan numbers $\CatNum{n} = \KatNum{2}{n}$ from Section \ref{sec:intro_classic}.
The authors connect these generalized Catalan numbers to Stirling permutations by proving that $|\avoid[k]{n}{212,312}| = \KatNum{k+1}{n}$, with the $k=1$ case providing one generalization of Knuth's contribution to Theorem \ref{thm:MK} (also see Section \ref{sec:intro_regular}).

The aforementioned $k$-Catalan result was also obtained independently within the order theory community.
In particular, the elements of the $m$-Tamari lattice introduced by \cite{bergeron2012higher} are $m$-Dyck paths which are enumerated by $\KatNum{m+1}{n}$. 
As shown by \cite{novelli2020hopf}, the elements can also be viewed as equivalence classes of $(m+1)$-Stirling words (using their $121$-avoiding representation) under a generalization of the sylvester congruence described by \cite{hivert2005algebra};
representatives of these classes are precisely those that avoid the appropriate pattern from $\PERMS{3}$.
We also found \cite{pons2015lattice} and \cite{ceballos2024weak} to be helpful towards understanding these results.

\cite{archer2018pattern} introduced quasi-Stirling permutations: $2$-regular words on $[n]$ that avoid the patterns $1212$ and $2121$. 
The paper shows a bijection between ordered rooted labeled trees and the set of quasi-Stirling permutations, and finds the number of quasi-Stirling permutations that avoid between two and five patterns of length $3$. 
\cite{elizalde2024pattern} consider nonnesting permutations: $2$-regular words on $[n]$ that avoid $1221$ and $2112$, and gives enumeration results for the number of nonnesting permutations that avoid at least two permutations of length $3$.

%


\subsection{Pattern Avoidance with Regular Words}
\label{sec:intro_regular}

There has been a relative lack of pattern avoidance results on non-Stirling regular words.
Said another way, most pattern avoidance results on regular words have (implicitly) assumed that $121$ is avoided.
Omitting this assumption allowed us to discover Theorem \ref{thm:b} --- which uses the non-Stirling pattern $122$ --- as a natural companion to Theorem~\ref{thm:a}.

A significant pattern avoidance result for (non-Stirling) regular words involves the \emph{$k$-ary Catalan numbers} $\KatNum{k}{n}$ which count $k$-ary trees with $n$ nodes.
While Theorem \ref{thm:MK} shows that there are two distinct patterns for $\CatNum{n} = \KatNum{2}{n}$ (up to symmetries of the square), there are three distinct pairs for $k > 2$.
\begin{itemize}
\item \cite{kuba2012enumeration} proved that $|\avoid[k-1]{n}{212,312}| = \KatNum{k}{n}$. 
\item \cite{defant2020stack} proved that $|\avoid[k-1]{n}{221, 231}| = \KatNum{k}{n}$. 
\item \cite{williams2023pattern} proved that $|\avoid[k-1]{n}{112, 123}| = \KatNum{k}{n}$.
\end{itemize}
Note that the first two cases collapse into one case when $k=2$ since complementing and reversing $312$ gives $231$, while the other pattern is hidden. 
Williams also observed that the three results are collectively characterized by choosing a single pattern from $\PERMS{3}$ and a pattern of $1$s and $2$s that is consistent with it\footnote{The author was not initially aware of the Kuba and Panholzer result.
The omission will be corrected in its extended version.}.

We mention that $k$-regular words arise elsewhere under a variety of names, including \emph{uniform permutations}, $m$-permutations, and \emph{fixed-frequency multiset permutations}.
For example, the middle levels theorem by \cite{mutze2016proof,mutze2023book} is conjectured to have a generalization to these words (see \cite{shen2021k}) and significant partial results were proven in a broader context by \cite{gregor2023star}.

\subsubsection{Regular Words and Non-Classical Patterns}
\label{sec:intro_regular_vincular}

To our knowledge, regular words have not been combined with other well-known non-classical pattern avoidance concepts\footnote{For further background on pattern avoidance concepts we recommend \cite{bevan2015permutation}.} (e.g., barred, dotted, mesh, etc).
We argue that such combinations are promising by contributing one more result involving (squared) Fibonacci numbers.

\begin{definition} \label{def:FibSquared}
The \emph{Fibonacci-squared numbers} are $c(n) = a_1(n)^2$.
\end{definition}

In a \emph{fully vincularized pattern} (also known as a \emph{consecutive pattern}) all of the matching symbols must have consecutive positions within the string (see \cite{babson2000generalized}).
In particular, $\vincular{121}$ is present in a string if and only if it contains three consecutive symbols of the form $xyx$ with $y>x$.

\begin{definition} \label{def:FibSquaredWords}
The \emph{Fibonacci-squared words} of length $2n$ are the words in $\avoid[2]{n}{\vincular{121}, 123, 132, 213} \subseteq~\PERMS[2]{n}$.
\end{definition}

Informally, the following theorem states that fully vincularizing the Stirling pattern in Theorem \ref{thm:a} changes the Jacobsthal sequence into the Fibonacci-squared sequence.

\begin{theorem} \label{thm:c}
$c(n) = \avoid[2]{n}{\vincular{121}, 123, 132, 213}$ for all $n \geq 0$.
\end{theorem}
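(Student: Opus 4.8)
The plan is to prove the statement by strong induction on $n$ via a ``peel off the largest letter'' decomposition, carried along with an auxiliary count. Write $B_n = \avoid[2]{n}{\vincular{121},123,132,213}$, so the claim is $|B_n| = a_1(n)^2$. I would simultaneously track the family $T_n$ of words over $[n]$ in which the letter $n$ occurs \emph{once} and every smaller letter occurs twice, while still avoiding all four patterns; the companion claim is $|T_n| = a_1(n)\,a_1(n-1)$. The first task is a structural lemma: in any word avoiding $\{123,213\}$, only two distinct letters can occur before the last copy of the maximum $n$, namely $n$ itself and at most one smaller letter $v$ (otherwise two smaller letters $a<b$ before the final $n$ make $abn$ a $123$ or $ban$ a $213$); and once $132$ is also avoided, $v$ must equal $n-1$, since a letter $v<n-1$ before the final $n$ together with the forced appearance of some $w$ with $v<w<n$ after it would create the $132$ pattern $v\,n\,w$.

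Granting the lemma, every $W \in B_n$ factors as $W = P\cdot S$, where $P$ runs up to and including the last $n$ and uses only the letters $\{n-1,n\}$, and $S$ is the remaining suffix over $[n-1]$. Let $m\in\{0,1,2\}$ count the copies of $n-1$ inside $P$. Since the two largest letters occur only in $P$, which lies entirely before $S$, any three-term subsequence meeting $P$ has its large letter first, which is incompatible with $123$, $132$, and $213$; hence no classical pattern can straddle the cut, and the only cross constraint is a single $\vincular{121}$ condition: a consecutive $(n-1)\,n\,(n-1)$ is created exactly when $P$ ends in $\ldots(n-1)\,n$ and $S$ begins with $n-1$. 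Enumerating the $\vincular{121}$-legal shapes of $P$ for each $m$ and combining with this cut condition gives, after cancellation, the two recurrences
\[ |B_n| = |B_{n-1}| + |B_{n-2}| + 2\,|T_{n-1}| \quad\text{and}\quad |T_n| = |B_{n-1}| + |T_{n-1}|. \]
Here the $m=0$ case contributes a suffix in $B_{n-1}$; the $m=2$ case contributes $2\,|B_{n-2}|$ for $B_n$ (two legal prefixes) but only $|B_{n-2}|$ for $T_n$ (a single maximum admits one legal prefix); and the delicate $m=1$ case contributes $2\,|T_{n-1}|-|B_{n-2}|$ for $B_n$, where $|B_{n-2}|$ is exactly the number of words of $T_{n-1}$ that begin with their maximum letter, which are precisely the ones forbidden when $P$ ends in $(n-1)\,n$.

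Finally I would close the induction. With base cases $|B_0|=|B_1|=1$ and $|T_1|=1$ matching $a_1(0)^2=a_1(1)^2=1$ and $a_1(1)\,a_1(0)=1$, the inductive hypotheses $|B_k|=a_1(k)^2$ and $|T_k|=a_1(k)\,a_1(k-1)$ feed into the recurrences, using $a_1(n)=a_1(n-1)+a_1(n-2)$, to give
\[ |T_n| = a_1(n-1)\big(a_1(n-1)+a_1(n-2)\big) = a_1(n)\,a_1(n-1), \]
\[ |B_n| = a_1(n-1)^2 + a_1(n-2)^2 + 2\,a_1(n-2)\,a_1(n-1) = \big(a_1(n-1)+a_1(n-2)\big)^2 = a_1(n)^2. \]
The main obstacle is the $m=1$ case: because the two copies of $n-1$ get split across the cut, the suffix ceases to be a $2$-regular word and the recursion cannot close within the $B_n$ alone. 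Recognizing the correct auxiliary family $T_n$ (single maximum), proving its parallel decomposition, and correctly accounting for the coupled $\vincular{121}$ junction constraint — in particular the cancellation that turns $2\,|T_{n-1}|-|B_{n-2}|$ into a clean linear recurrence — is the crux, after which the structural lemma and the ``no straddling classical pattern'' check are routine.
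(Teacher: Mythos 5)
Your proposal is correct, and the load-bearing details all check out: your structural lemma is the paper's Lemma~\ref{lem:order} specialized to $y=n$; no classical pattern can straddle the cut even in the $m\le 1$ cases where the value $n{-}1$ appears on both sides, since occurrences of $123$, $132$, $213$ use distinct values; the junction analysis is right ($\vincular{121}$ straddles exactly when $P$ ends $(n{-}1)\,n$ and $S$ begins with $n{-}1$, and this is vacuous in the $m=2$ cases because there $S$ lies over $[n-2]$); deleting the leading maximum gives the bijection showing that $T_{n-1}$-words beginning with their maximum number $|B_{n-2}|$; and the resulting system $|B_n|=|B_{n-1}|+|B_{n-2}|+2|T_{n-1}|$, $|T_n|=|B_{n-1}|+|T_{n-1}|$ matches the data (e.g.\ $|B_3|=4+1+2\cdot 2=9$). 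However, your route is genuinely different from the paper's. The paper peels off an entire \emph{annex}: it takes the standard partition $\gamma=\alpha\beta$ in which the base $\beta$ is the longest strict suffix that is again a $2$-regular word, classifies all annexes in Theorem~\ref{thm:cprimeprime} (the four special cases $nn$, $(n{-}1)(n{-}1)nn$, $(n{-}1)nn(n{-}1)$, $n(n{-}1)(n{-}1)n$, plus two root-to-leaf paths per level of a prefix tree), and so obtains the full-history recurrence $c''(n)=c''(n-1)+3\cdot c''(n-2)+2\sum_{i=3}^{n}c''(n-i)$, which Lemma~\ref{lem:recurrences} reconciles with the externally known three-term recurrence for \OEIS{A007598}. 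You peel off only the letter $n$; in the $m=1$ case this splits the pair of $(n{-}1)$'s across the cut and forces you out of regular words into the auxiliary family $T_n$, but in exchange you get a first-order coupled system that closes by induction using nothing beyond $a_1(n)=a_1(n-1)+a_1(n-2)$ and $(x+y)^2=x^2+2xy+y^2$. Your version is therefore more self-contained --- it needs no known identity for the Fibonacci-squared numbers --- and it yields the bonus enumeration $|T_n|=a_1(n)\,a_1(n-1)$ (the Fibonacci-product numbers), whereas the paper's annex decomposition buys an explicit global structure theorem, the prefix-tree description of every word in Figure~\ref{fig:proofc}, which your one-level peeling does not expose. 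Indeed the paper's four special annexes are visible inside your cases: $nn$ is your $m=0$ case, $(n{-}1)(n{-}1)nn$ and $n(n{-}1)(n{-}1)n$ are your $m=2$ case, and $(n{-}1)nn(n{-}1)$ sits inside your $m=1$ case as $P=(n{-}1)nn$ followed by a $T_{n-1}$-word beginning with its maximum.
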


For example, the Fibonacci-squared sequence $\{c_n\}_{n \geq 0}$ is $1, 1, 4, 9, 25, 64, 169, 441, \ldots$ (\OEIS{A007598}).
Its first four terms count the Fibonacci-squared words over $[n]$ for $n = 0,1,2,3$ as shown in~\eqref{eq:c0}--\eqref{eq:c3}.
\begin{align}
1 = |\avoid[2]{0}{\vincular{121}, 123, 132, 213}| &= |\{ \epsilon \}| \label{eq:c0} \\
1 = |\avoid[2]{1}{\vincular{121}, 123, 132, 213}| &= |\{11\}| \label{eq:c1} \\
4 = |\avoid[2]{2}{\vincular{121}, 123, 132, 213}| &= |\{1122, 1221, 2112, 2211\}| \label{eq:c2} \\
9 = |\avoid[2]{3}{\vincular{121}, 123, 132, 213}| &= |\{223311, 233112, 233211, 322311, 323112, 
 \notag \\
&\phantom{{}= |\{} 331122, 331221, 332112, 332211\}| \label{eq:c3}
\end{align}

For example, note that $233112$ appears in \eqref{eq:c3} but not in \eqref{eq:a23}.
This is because it contains the pattern $121$ but not the consecutive pattern $\vincular{121}$.
Figure \ref{fig:proofc} provides Fibonacci-squared words up to $n=5$, and illustrates why Theorem \ref{thm:c} holds.
We prove this result in Section \ref{sec:vincular} using a new recurrence for \OEIS{A007598}.

\subsection{Outline}
\label{sec:intro_outline}

Sections \ref{sec:Fibonaccik} and \ref{sec:kFibonacci} prove Theorems~\ref{thm:a} and \ref{thm:b}, respectively.
The proofs involve simple bijections which could be suitable exercises for students and researchers interested in regular word pattern avoidance. 
Section~\ref{sec:vincular} gives our vincular result on Fibonacci-squared numbers.
Section~\ref{sec:final} closes with final remarks.

As previously mentioned, Theorem \ref{thm:a} was proven in a broader context by \cite{kuba2012enumeration}.
We encourage readers to investigate related results by \cite{janson2011generalized} and \cite{kuba2019stirling}.

\begin{figure}
    \centering
    \begin{subfigure}[T]{0.49\textwidth}
        \centering
        \fbox{\includegraphics{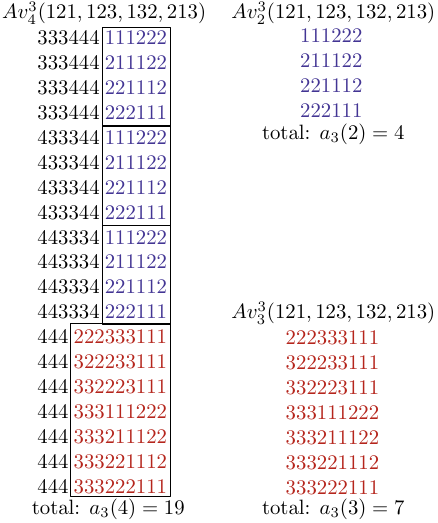}}
        \caption{
        Fibonacci-$k$ words for $k=3$ (in lexicographic order).
        These words contain $k$ copies of $[n]$ and avoid the patterns $121$, $123$, $132$, and $213$.
        Theorem \ref{thm:a} proves that they are enumerated by the $(1,1)$-based Fibonacci-$k$ numbers:
        $a_k(n) = a(n-1) + k \cdot a(n-2)$ with base cases $a_k(0) = a_k(1) = 1$.
        The lists above show how the $n=4$ words are constructed from $k=3$ copies of the $n=2$ words (\blueText{blue}) and one copy of the $n=3$ words (\redText{red}).
        The totals match $a_3(4) = a_3(3) + 3 \cdot a_3(2) = 7 + 3 \cdot 4 = 19$. 
        The special case of $k=1$ is a classic result for Fibonacci numbers: $\avoid{n}{123, 132, 213} = F_{n+1}$ (see Section \ref{sec:intro_classic}).
        }
        \label{fig:proofab_Fibk}
        \vspace{1em}
        \caption{
        $k$-Fibonacci words for $k=3$ (in lexicographic order).
        These words contain $k$ copies of $[n]$ and avoid the patterns $122$ and $213$.
        Theorem \ref{thm:b} proves that they are enumerated by the $(1,1)$-based $k$-Fibonacci numbers when $k \geq 2$:
        $b_k(n) = k \cdot b(n-1) + b(n-2)$ with base cases $b_k(0) = b_k(1) = 1$.
        The lists to the right show how the $n=4$ words are constructed from one copy of the $n=2$ words (\blueText{blue}) and $k=3$ copies of the $n=3$ words (\redText{red}).
        The totals match $b_3(4) = 3 \cdot b_3(3) + b_3(2) = 3 \cdot 13 + 4 = 43$.
        }
        \end{subfigure}
    \hfill
    \begin{subfigure}[T]{0.49\textwidth}
        \centering
        \fbox{\includegraphics{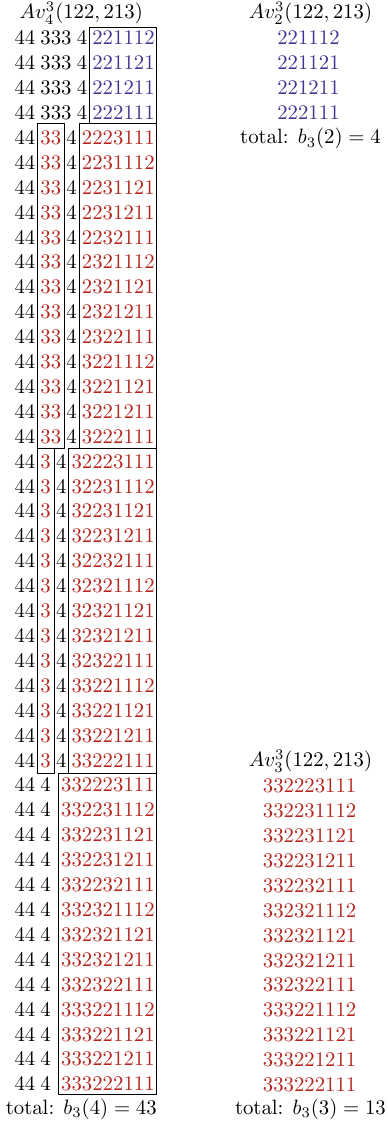}}
        \label{fig:proofab_kFib}
    \end{subfigure}
    \caption{Illustrating Theorems \ref{thm:a} and \ref{thm:b} and their proofs.
    (a) The Fibonacci-$k$ words are $k$-regular words avoiding $\{121, 123, 132, 213\}$ and they are enumerated by the Fibonacci-$k$ numbers $a_k(n)$ for any $k \geq 1$. 
    (b) The $k$-Fibonacci words are $k$-regular words avoiding $\{122, 213\}$ and they are enumerated by the $k$-Fibonacci numbers $b_k(n)$ for any $k \geq 2$. 
    }
    \label{fig:proofab}
\end{figure}

\section{Pattern Avoidance for Fibonacci-\texorpdfstring{$k$}{k} Words}
\label{sec:Fibonaccik}

The Fibonacci-$k$ sequences up to $k=9$ are illustrated in Table \ref{tab:sequencesk}.

\begin{table}[ht]
\centering
\small
\begin{tabular}{|@{\;\;}c@{\;\;}r@{\,}c@{\,}r@{\;\;}c@{\;\;}l@{\;\;}r@{\;\;}|} \hline
\rowcolor{gray!20}
\multicolumn{1}{|c}{Name} & \multicolumn{3}{c}{Recurrence $a(n)$} & Numbers & \multicolumn{1}{c}{\textsc{Oeis}} & \multicolumn{1}{c@{\;\;}|}{Pattern Avoidance} \\ \hline
Fibonacci     & $a(n{-}1)$ & $+$ & $        a(n{-}2)$ & $1, 1, 2,  3,  5,   8,  13,   21, \ldots$ &\OEIS{A000045}$^{\dag}$ & $\avoid{n}{123,132,213}$ \\
Jacobsthal    & $a(n{-}1)$ & $+$ & $2 \cdot a(n{-}2)$ & $1, 1, 3,  5, 11,  21,  43,   85, \ldots$ &\OEIS{A001045}$^{\dag}$ & $\avoid[2]{n}{121,123,132,213}$ \\
Fibonacci-$3$ & $a(n{-}1)$ & $+$ & $3 \cdot a(n{-}2)$ & $1, 1, 4,  7, 19,  40,  97,  217, \ldots$ &\OEIS{A006130} & $\avoid[3]{n}{121,123,132,213}$ \\
Fibonacci-$4$ & $a(n{-}1)$ & $+$ & $4 \cdot a(n{-}2)$ & $1, 1, 5,  9, 29,  65, 181,  441, \ldots$ &\OEIS{A006131} & $\avoid[4]{n}{121,123,132,213}$ \\
Fibonacci-$5$ & $a(n{-}1)$ & $+$ & $5 \cdot a(n{-}2)$ & $1, 1, 6, 11, 41,  96, 301,  781, \ldots$ &\OEIS{A015440} & $\avoid[5]{n}{121,123,132,213}$ \\
Fibonacci-$6$ & $a(n{-}1)$ & $+$ & $6 \cdot a(n{-}2)$ & $1, 1, 7, 13, 55, 133, 463, 1261, \ldots$ &\OEIS{A015441}$^{\dag}$ & $\avoid[6]{n}{121,123,132,213}$ \\
Fibonacci-$7$ & $a(n{-}1)$ & $+$ & $7 \cdot a(n{-}2)$ & $1, 1, 8, 15, 71, 176, 673, 1905, \ldots$ &\OEIS{A015442}$^{\dag}$ & $\avoid[7]{n}{121,123,132,213}$ \\
Fibonacci-$8$ & $a(n{-}1)$ & $+$ & $8 \cdot a(n{-}2)$ & $1, 1, 9, 17, 89, 225, 937, 2737, \ldots$ &\OEIS{A015443} & $\avoid[8]{n}{121,123,132,213}$ \\
Fibonacci-$9$ & $a(n{-}1)$ & $+$ & $9 \cdot a(n{-}2)$ & $1, 1, 10,19, 109,280, 1261,3781, \ldots$ &\OEIS{A015445} & $\avoid[9]{n}{121,123,132,213}$ \\
\hline
\end{tabular}
\caption{The Fibonacci-$k$ numbers $a_k(n) = a_k(n-1) + k \cdot a_k(n-2)$ with $a_k(0) = 1$ and $a_k(1) = 1$ for $1 \leq k \leq 9$ with their sequences and our pattern avoidance results from Theorem \ref{thm:a}. 
Previous pattern-avoiding results are by \cite{simion1985restricted} ($k=1$) (see also \cite{baril2011classical} and \cite{baril2019enumeration}), \cite{mansour2002refined} ($k=2$), and \cite{sun2022d} ($k=3$).
$^{\dag}$\textsc{Oeis} entry starts with~$0$.}
\label{tab:sequencesk}
\end{table}






\begin{lem}\label{lem:order}
    Suppose $\alpha\in \PERMS[k]{n}$ where $\alpha$ avoids the patterns $123$, $132$, $213$ and let $y \in \{2,3,\ldots,n\}$. Then the smallest symbol that can occur before $y$ in $\alpha$ is $y{-}1$. 
\end{lem}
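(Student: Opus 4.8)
The plan is to argue by contradiction, using the value $y-1$ as a forced witness that manufactures one of the three forbidden patterns. First I would suppose that some symbol $x$ with $x \le y-2$ occurs before $y$, and fix positions $i<j$ with $\alpha_i=x$ and $\alpha_j=y$. The crucial observation is that the intermediate value $z:=y-1$ satisfies $x<z<y$ (since $x\le y-2<y-1<y$), and because $y\ge 2$ we have $z\in\{1,\dots,n\}$, so at least one copy of $z$ appears somewhere in $\alpha$. I would fix any such copy, say at position $p$, and note that $p\notin\{i,j\}$ because $z\neq x$ and $z\neq y$.

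Next I would split into three cases according to where $p$ falls relative to $i<j$, and show that each case reproduces a forbidden pattern from the single entry-triple whose values satisfy $x<y-1<y$. If $p<i$, the entries $(\alpha_p,\alpha_i,\alpha_j)=(y-1,x,y)$ are order-isomorphic to $213$. If $i<p<j$, the entries $(x,y-1,y)$ are order-isomorphic to $123$. If $p>j$, the entries $(x,y,y-1)$ are order-isomorphic to $132$. Since these three positions are exhaustive, every placement of the copy of $y-1$ contradicts the assumption that $\alpha$ avoids $\{123,132,213\}$. Hence no symbol smaller than $y-1$ can precede $y$, which is exactly the claim (and $y-1$ itself is attainable, e.g.\ in the prefix $(y-1)\,y$).

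I do not expect a genuine obstacle here: the entire argument hinges on the single clean observation that the three forbidden patterns correspond precisely to the three regions in which a value strictly between $x$ and $y$ can sit relative to a fixed occurrence of $x$ before $y$. The only points requiring care are verifying that $y-1$ is a legitimate symbol so that a witnessing copy exists (this uses $y\ge 2$, which is hypothesized) and confirming the case split is exhaustive. It is worth noting that $k$-regularity is used only to guarantee at least one copy of $y-1$, and that the avoided pattern $121$ plays no role; this is why the lemma holds across the whole class in Theorem~\ref{thm:a} and would remain valid for any $k\ge 1$.
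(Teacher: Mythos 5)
Your proof is correct and follows essentially the same argument as the paper: assume some $x<y-1$ precedes $y$, then show that any placement of a copy of $y-1$ (before $x$, between $x$ and $y$, or after $y$) produces the forbidden pattern $213$, $123$, or $132$, respectively. Your additional remarks---that $k$-regularity guarantees a copy of $y-1$ exists and that the case split is exhaustive---are careful touches the paper leaves implicit, but the route is identical.
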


\begin{proof}
    Let $\alpha\in \PERMS[k]{n}$ where $\alpha$ avoids $123$, $132$, $213$ and suppose for contradiction that the symbol $x$ occurs before the symbol $y$ in $\alpha$, where $x<y{-}1$.
    Consider each of the relative positions of the symbol $y{-}1$:
    \begin{itemize} 
        \item $(y{-}1) \cdots x \cdots y$ contains the pattern $213$.
        \item $x \cdots (y{-}1) \cdots y$ contains the pattern $123$.
        \item $x \cdots y \cdots (y{-}1)$ contains the pattern $132$.
    \end{itemize}
    Since each of these patterns must be avoided, we see that $x$ cannot occur before $y$ in $\alpha$.
\end{proof}



\begin{prp} \label{prp:setk} 
Let $n \geq 2$.
Then $\alpha \in \avoid[k]{n}{121, 123, 132, 213}$ if and only if one of the following is true: 
    \begin{enumerate}
        \item[(1)] $\alpha=n^k\gamma$ where $\gamma \in \avoid[k]{n-1}{121, 123, 132, 213}$;
        \item[(2)] $\alpha=n^b(n{-}1)^kn^{k-b}\gamma$ where $0\leq b < k$ and $\gamma \in \avoid[k]{n-2}{121, 123, 132, 213}$. 
    \end{enumerate}
\end{prp}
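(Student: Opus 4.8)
The plan is to prove both directions by analyzing where the $k$ copies of the two largest symbols, $n$ and $n-1$, can sit, using Lemma~\ref{lem:order} together with the fact that $121$-avoidance forbids the subword $(n{-}1)\cdots n \cdots (n{-}1)$ (the only one of the four patterns realizable by just two distinct values). Two facts drive everything. First, applying Lemma~\ref{lem:order} with $y=n$ shows that every symbol occurring before any copy of $n$ lies in $\{n{-}1,n\}$. Second, since $123,132,213$ each require three distinct values, the only forbidden pattern realizable by the symbols $\{n{-}1,n\}$ alone is $121$, so no copy of $n$ may lie strictly between two copies of $n{-}1$.

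For the forward direction, I would let $P$ be the prefix of $\alpha$ ending at its rightmost copy of $n$. By the first fact, $P$ uses only $n{-}1$ and $n$, and it contains all $k$ copies of $n$. By the second fact the copies of $n{-}1$ inside $P$ must be consecutive, so $P = n^b (n{-}1)^j n^c$ with $b+c=k$; since $P$ ends in $n$ we get $c \geq 1$ whenever $j \geq 1$. The crux is then a dichotomy on $j$, the number of copies of $n{-}1$ lying before the last $n$: if $0 < j < k$, then some $n{-}1$ sits in $P$ (hence before the last $n$) while another $n{-}1$ sits after $P$ (hence after the last $n$), producing the forbidden pattern $(n{-}1)\,n\,(n{-}1)$. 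Thus $j\in\{0,k\}$. When $j=0$ the word is $n^k\gamma$, giving case~(1); when $j=k$ it is $n^b(n{-}1)^k n^{k-b}$ with $0\le b<k$, giving case~(2). In both cases the tail $\gamma$ is a $k$-regular word over the smaller alphabet that inherits avoidance as a contiguous subword of $\alpha$.

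For the reverse direction, I would assume $\alpha$ has one of the two forms and show that no occurrence of a forbidden pattern can use a symbol from the prefix block $n^k$ (case~(1)) or $n^b(n{-}1)^k n^{k-b}$ (case~(2)). The key observations are that none of $121,123,132,213$ has its maximum value in the first position, and none has its unique minimum in the last position. Since the prefix symbols are both larger than every symbol of $\gamma$ and occur to their left, an occurrence meeting the prefix in exactly one position would force a strict maximum in position~$1$, and one meeting it in exactly two positions would force a strict minimum in position~$3$, both of which are impossible. An occurrence lying wholly in the prefix can only use the values $\{n{-}1,n\}$, hence could only be $121$, which the prefix avoids by its explicit block structure; and an occurrence lying wholly in $\gamma$ is excluded by hypothesis.

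I expect the main obstacle to be the forward dichotomy $j\in\{0,k\}$: establishing that the copies of $n{-}1$ cannot be split across the last copy of $n$ is what simultaneously rules out every intermediate shape and pins the structure down to exactly the two advertised cases. The consecutive-block claim for the $n{-}1$'s within $P$ and the bookkeeping that $\gamma$ is $k$-regular over $[n{-}1]$ or $[n{-}2]$ are routine once the two driving facts are in place.
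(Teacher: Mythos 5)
Your proposal is correct and takes essentially the same route as the paper: both directions rest on Lemma~\ref{lem:order} together with the observation that a copy of $n$ lying between two copies of $n{-}1$ creates the forbidden $121$, which pins the prefix down to $n^k$ or $n^b(n{-}1)^k n^{k-b}$ with $0 \leq b < k$. Your anchoring at the rightmost copy of $n$ with the dichotomy $j \in \{0,k\}$, and your verification of the converse by counting how many positions of a would-be occurrence lie in the prefix (no pattern has a strict maximum first or a strict minimum last), are just a slightly more systematic packaging of the argument the paper gives.
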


\begin{proof}
    We start by proving that any element of $\avoid[k]{n}{121, 123, 132, 213}$ is of form (1) or (2) above. Suppose $\alpha=a_1a_2 \cdots a_m \in \avoid[k]{n}{121, 123, 132, 213}$. If the first $k$ symbols are not all $n$, let $a_i$ be the first symbol of $\alpha$ that is not $n$, where $0\leq i<k$. Since at least one copy of $n$ must follow $a_i$, $a_i$ can only be equal to $n{-}1$ by Lemma \ref{lem:order}. We claim that all $k-1$ remaining copies of $n{-}1$ must follow immediately. Suppose for contradiction that a different symbol, $t$, appears before one of the copies of $n{-}1$. Then $t$ cannot be equal to $n$, because more copies of $n{-}1$ must follow, creating the sub-permutation $(n{-}1)n(n{-}1)$, which is the forbidden pattern $121$. Thus all copies of $n{-}1$ must precede the remaining copies of $n$. The symbol $t$ cannot be less than $n{-}1$, because there are remaining copies of $n$ to the right, and by Lemma \ref{lem:order}, the smallest symbol that can occur before $n$ is $n{-}1$. Therefore, if $\alpha$ does not begin with $n^k$, then $\alpha$ must begin with $b$ copies of $n$, followed by $k$ copies of $n{-}1$, followed by $k-b$ copies of $n$, where $0\leq b< k-1$.

    We now show that any element described by (1) or (2) is a member of $\avoid[k]{n}{121, 123, 132, 213}$.
    Observe first that $\alpha=n^k\gamma\in \avoid[k]{n}{121, 123, 132, 213}$ if $\gamma \in \avoid[k]{n-1}{121, 123, 132, 213}$, since none of the patterns we wish to avoid start with a largest symbol.  Suppose that $\alpha$ is a word of the form $n^b(n{-}1)^kn^{k-b}\gamma$, where $\gamma \in \avoid[k]{n-2}{121, 123, 132, 213}$ and  $0\leq b < k$. Observe that $n^b(n{-}1)^kn^{k-b}$ does not contain the pattern $121$, and given that $\gamma$ also did not contain this pattern, $\alpha$ also cannot, since the symbols in $\gamma$ are all smaller than $n$ and $n{-}1$. Similarly, the patterns $123$, $132$, and $213$ also cannot occur in $\alpha$, since the prefix contains the largest values $n$ and $n{-}1$, and the forbidden patterns did not occur in $\gamma$.
\end{proof}  


The following remark and corollary complete our proof of Theorem \ref{thm:a} for Fibonacci-$k$ words.

\begin{rem}\label{base} The set $\avoid[k]{0}{121, 123, 132, 213}=\{ \epsilon\}$; the set $\avoid[k]{1}{121, 123, 132, 213}=\{1^k\}$. 

\end{rem}

\begin{cor} \label{cor:Fibk}
    The following holds for all $k\geq 1$ and $n\geq 2$ where $|\avoid[k]{0}{121, 123, 132, 213}|=1$ and $|\avoid[k]{1}{121, 123, 132, 213}|=1$, 
    \begin{equation}
    |\avoid[k]{n}{121, 123, 132, 213}| = |\avoid[k]{n-1}{121, 123, 132, 213}| + k \cdot |\avoid[k]{n-2}{121, 123, 132, 213}|.
    \end{equation}
\end{cor}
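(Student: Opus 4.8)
The plan is to derive Corollary \ref{cor:Fibk} directly from Proposition \ref{prp:setk} and the base cases recorded in Remark \ref{base}. The base cases give $|\avoid[k]{0}{121, 123, 132, 213}| = 1$ and $|\avoid[k]{1}{121, 123, 132, 213}| = 1$ immediately, so the work is entirely in the recurrence for $n \geq 2$.

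First I would invoke Proposition \ref{prp:setk}, which characterizes every $\alpha \in \avoid[k]{n}{121, 123, 132, 213}$ as falling into exactly one of two forms: either $\alpha = n^k\gamma$ with $\gamma \in \avoid[k]{n-1}{121, 123, 132, 213}$ (form (1)), or $\alpha = n^b(n{-}1)^k n^{k-b}\gamma$ with $0 \leq b < k$ and $\gamma \in \avoid[k]{n-2}{121, 123, 132, 213}$ (form (2)). The proof is then just a counting argument: I would establish that these two forms partition the set, and count the contribution of each. Form (1) contributes a bijection onto $\avoid[k]{n-1}{121, 123, 132, 213}$, since the prefix $n^k$ is fixed and $\gamma$ ranges freely over that set, giving exactly $|\avoid[k]{n-1}{121, 123, 132, 213}|$ words. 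Form (2) is indexed by the choice of $b \in \{0, 1, \ldots, k-1\}$ (which is $k$ choices) together with a free choice of $\gamma \in \avoid[k]{n-2}{121, 123, 132, 213}$, and for each fixed pair $(b, \gamma)$ the word $\alpha$ is uniquely determined; hence this form contributes $k \cdot |\avoid[k]{n-2}{121, 123, 132, 213}|$ words. Summing yields the claimed recurrence.

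The two points requiring a small amount of care are disjointness and uniqueness. For disjointness, I would observe that a word of form (1) begins with $k$ copies of $n$ before any other symbol appears, whereas a word of form (2) contains a copy of $n{-}1$ among its first $2k$ symbols (indeed immediately after the initial block of $b < k$ copies of $n$), so the two forms describe genuinely disjoint subsets and no word is double-counted. For uniqueness within form (2), I would note that the exponent $b$ is recoverable from $\alpha$ as the number of leading $n$'s (equivalently, $b$ is determined by the position of the first $n{-}1$), so distinct values of $b$ yield distinct words and the map $(b, \gamma) \mapsto \alpha$ is injective; combined with surjectivity onto form (2), this gives the factor of exactly $k$.

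I do not expect any serious obstacle here, since Proposition \ref{prp:setk} already does all the structural heavy lifting — it supplies both the ``only if'' decomposition and the ``if'' verification that each such word genuinely avoids the patterns. The only mildly delicate point is ensuring the index range is $0 \leq b < k$ (yielding precisely $k$ values and hence the coefficient $k$, not $k-1$ or $k+1$); I would double-check this against the statement of form (2) to make sure the count of admissible $b$ matches the recurrence coefficient. Everything else is a routine partition-and-add argument, and the result follows at once.
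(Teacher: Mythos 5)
Your proposal is correct and follows essentially the same route as the paper: both derive the recurrence directly from Proposition~\ref{prp:setk}, counting one word per element of $\avoid[k]{n-1}{121,123,132,213}$ from form (1) and $k$ words per element of $\avoid[k]{n-2}{121,123,132,213}$ from form (2), with Remark~\ref{base} supplying the base cases. Your explicit checks of disjointness (form (1) begins with $n^k$ while form (2) places an $n{-}1$ within the first $k$ positions) and of the injectivity of $(b,\gamma)\mapsto\alpha$ are left implicit in the paper's proof, so if anything your write-up is slightly more careful.
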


\begin{proof}
By Proposition \ref{prp:setk}, the elements of $\avoid[k]{n}{121, 123, 132, 213}$ can be constructed as follows:
\begin{itemize}
    \item Create $|\avoid[k]{n-1}{121, 123, 132, 213}|$ elements of $\avoid[k]{n}{121, 123, 132, 213}$ by inserting $n^k$ in front of each element of $\avoid[k]{n-1}{121, 123, 132, 213}$;
    \item Create $k\cdot |\avoid[k]{n-2}{121, 123, 132, 213}|$ elements of $\avoid[k]{n}{121, 123, 132, 213}$ by inserting a prefix of the form $n^b(n-1)^kn^{k-b}$ in front of each element of $\avoid[k]{n-2}{121, 123, 132, 213}$ where $0 \leq b < k$. There are $k$ such prefixes. 
\end{itemize}
Therefore, with Remark \ref{base} as a base case, we have our result.     Note that in the second step of constructing $\avoid[k]{2}{121, 123, 132, 213}=\{2^b1^k2^{k-b}:0\leq b \leq k \}$, we create $k$ elements by placing $2^b1^k2^{k-b}$ in front of the empty word, to obtain $2^b1^k2^{k-b}\cdot \epsilon=2^b1^k2^{k-b}$, for each $0 \leq b < k$.
\end{proof}


\section{Pattern Avoidance for \texorpdfstring{$k$}{k}-Fibonacci Words}
\label{sec:kFibonacci}

The $k$-Fibonacci sequences up to $k=9$ are illustrated in Table \ref{tab:ksequences}.

\begin{table}[ht]
\centering
\small
\begin{tabular}{|@{\;\;}c@{\;\;}r@{\,}c@{\,}r@{\;\;}c@{\;\;}c@{\;\;}r@{\;\;}|} \hline
\rowcolor{gray!20}
\multicolumn{1}{|c}{Name} & \multicolumn{3}{c}{Recurrence $b(n)$} & Numbers & \textsc{Oeis}  & \multicolumn{1}{c@{\;\;}|}{Pattern Avoidance} \\ \hline
Catalan       &                    &     &            & $1, 1, 2, 5, 14, 42, 132, 429, \ldots$ &\OEIS{A000108} & $\avoid{n}{132}$ \\ 
$2$-Fibonacci & $2 \cdot b(n{-}1)$ & $+$ & $b(n{-}2)$ & $1, 1, 3, 7, 17, 41, 99, 239, \ldots$ &\OEIS{A001333} & $\avoid[2]{n}{112,132}$ \\
$3$-Fibonacci & $3 \cdot b(n{-}1)$ & $+$ & $b(n{-}2)$ & $1, 1, 4, 13, 43, 142, 469, 1549, \ldots$ &\OEIS{A003688} & $\avoid[3]{n}{112,132}$ \\
$4$-Fibonacci & $4 \cdot b(n{-}1)$ & $+$ & $b(n{-}2)$ & $1, 1, 5, 21, 89, 377, 1597, 6765, \ldots$ &\OEIS{A015448} & $\avoid[4]{n}{112,132}$ \\
$5$-Fibonacci & $5 \cdot b(n{-}1)$ & $+$ & $b(n{-}2)$ & $1, 1, 6, 31, 161, 836, 4341, 22541, \ldots$ &\OEIS{A015449} & $\avoid[5]{n}{112,132}$ \\
$6$-Fibonacci & $6 \cdot b(n{-}1)$ & $+$ & $b(n{-}2)$ & $1, 1, 7, 43, 265, 1633, 10063, 62011, \ldots$ &\OEIS{A015451} & $\avoid[6]{n}{112,132}$ \\
$7$-Fibonacci & $7 \cdot b(n{-}1)$ & $+$ & $b(n{-}2)$ & $1, 1, 8, 57, 407, 2906, 20749, 148149, \ldots$ &\OEIS{A015453} & $\avoid[7]{n}{112,132}$ \\
$8$-Fibonacci & $8 \cdot b(n{-}1)$ & $+$ & $b(n{-}2)$ & $1, 1, 9, 73, 593, 4817, 39129, 317849, \ldots$ &\OEIS{A015454} & $\avoid[8]{n}{112,132}$ \\
$9$-Fibonacci & $9 \cdot b(n{-}1)$ & $+$ & $b(n{-}2)$ & $1, 1, 10, 91, 829, 7552, 68797, 626725, \ldots$ &\OEIS{A015455} & $\avoid[9]{n}{112,132}$ \\
\hline
\end{tabular}
\caption{The $k$-Fibonacci numbers $b_k(n) = k \cdot b_k(n-1) + b_k(n-2)$ with $b_k(0) = 1$ and $b_k(1) = 1$ for $2 \leq k \leq 9$ with their sequences and our pattern avoidance results from Theorem \ref{thm:b}. 
Previous pattern-avoiding results for $k=2$ are by \cite{gao2011sequences}, \cite{kuba2012enumeration}, and \cite{gao2014tackling}. 
The Catalan numbers do not follow a Fibonacci-style recurrence, and are not covered by Theorem \ref{thm:b}. 
However, they are included in the table as they count $1$-regular words avoiding $112$ and $132$ (with the former hidden).
Many pattern-avoiding results exist for the Catalan numbers, including \cite{knuth1968art}, \cite{macmahon1915combinatory1}, and \cite{baril2011classical}.}
\label{tab:ksequences}
\end{table}





\begin{lem}\label{consecutive2}
    Suppose $\beta \in \avoid[k]{n}{122, 213}$ with $n \geq 2$ and $k \geq 2$. 
    For any $x,y \in \{1,2,\ldots,n\}$ with $y>x$, at most one copy of $y$ can occur to the right of $x$ in $\beta$.
\end{lem}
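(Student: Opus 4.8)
The plan is to argue by contradiction using only the $122$-avoidance that is built into membership in $\avoid[k]{n}{122, 213}$. I read ``to the right of $x$'' as ``to the right of some occurrence of $x$,'' so that the claim asserts the following: there is no occurrence of $x$ that is followed by two copies of $y$. This reading makes the bound a genuine universal statement, and a single contradiction will cover every occurrence of $x$ at once.

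First I would suppose, for contradiction, that some occurrence of $x$ (at position $i$, say) is followed by at least two copies of $y$, located at positions $j < l$ with $i < j < l$. The length-three subword $\beta_i \beta_j \beta_l$ then equals $x\,y\,y$, and since $x < y$ while the two copies of $y$ are equal, this subword is order-isomorphic to the pattern $122$. That contradicts $\beta \in \avoid[k]{n}{122, 213}$, so no occurrence of $x$ can precede two copies of $y$, which is exactly the assertion of the lemma.

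I do not anticipate a substantive obstacle; the only points requiring care are matching $x\,y\,y$ to $122$ under the word-pattern convention (a strictly smaller symbol followed by two equal larger symbols), and phrasing ``to the right of $x$'' cleanly enough that the single contradiction settles every occurrence of $x$ simultaneously. It is worth noting that $213$-avoidance is never invoked, so the statement is really about $122$-avoiding $k$-regular words, and that the hypothesis $k \geq 2$ serves only to guarantee that $y$ can have two copies at all — for $k = 1$ the conclusion is vacuously true.
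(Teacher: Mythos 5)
Your proof is correct and is essentially identical to the paper's own argument: both proceed by contradiction, observing that an occurrence of $x$ followed by two copies of $y > x$ yields a subword order-isomorphic to $122$, contradicting membership in $\avoid[k]{n}{122,213}$. Your side remarks (that $213$-avoidance is never used, and that $k \geq 2$ merely makes the statement non-vacuous) are accurate and consistent with the paper's treatment.
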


\begin{proof}
    Suppose to the contrary that $\beta \in \avoid[k]{n}{122, 213}$, and in $\beta$, some symbol $x<y$ occurs with more than one copy of $y$ to the right of $x$. 
    Then $\beta$ contains the pattern $122$, which contradicts that $\beta \in \avoid[k]{n}{122, 213}$.
\end{proof}

\begin{lem}\label{order2}
    Suppose $\beta \in \avoid[k]{n}{122, 213}$ with $n \geq 2$ and $k \geq 2$ and $y \in \{2,3,\ldots,n\}$. 
    Then the smallest symbol that can occur before $y$ in $\beta$ is $y{-}1$.
\end{lem}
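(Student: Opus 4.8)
The plan is to mirror the contradiction argument used for the analogous statement in Lemma~\ref{lem:order}, but to exploit the two forbidden patterns $122$ and $213$ in tandem rather than the three patterns $123,132,213$. Suppose for contradiction that some symbol $x$ with $x < y{-}1$ occurs before an occurrence of $y$ in $\beta$. Since $y \in \{2,\ldots,n\}$, the symbol $y{-}1$ is a legitimate value in $[n]$, and because $k \geq 2$ there are at least two copies of it; the whole argument will hinge on locating these copies relative to the fixed occurrence of $x$.

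The key observation is that the pair $(x, y{-}1)$ satisfies $x < y{-}1$, so Lemma~\ref{consecutive2} applies directly with $y{-}1$ playing the role of the larger symbol: at most one copy of $y{-}1$ can occur to the right of $x$. (Indeed, two copies of $y{-}1$ to the right of $x$ would form the forbidden pattern $122$ as $x,(y{-}1),(y{-}1)$.) Since there are $k \geq 2$ copies of $y{-}1$ in total, at least $k-1 \geq 1$ of them must lie to the \emph{left} of $x$. Fix one such copy. We then have the three positions $(y{-}1)$ before $x$ before $y$, and since $x < y{-}1 < y$, the subsequence $(y{-}1)\,x\,y$ is order isomorphic to $213$. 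This contradicts $\beta \in \avoid[k]{n}{122,213}$, completing the argument.

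The step I expect to carry the real content is the interplay between the two patterns together with the hypothesis $k \geq 2$: avoiding $122$ forces essentially all copies of $y{-}1$ to pile up on the left of $x$, and it is precisely one of these displaced copies that then triggers a $213$ with $x$ and $y$. Neither pattern alone suffices, and the multiplicity $k \geq 2$ is essential, since with a single copy of each symbol there would be no forced copy of $y{-}1$ to the left of $x$. The remaining details are routine once the copies are correctly placed, so the only care needed is to fix one occurrence of $x$ and one trailing occurrence of $y$ at the outset and to track positions relative to that chosen $x$.
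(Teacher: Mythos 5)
Your proof is correct and follows essentially the same route as the paper's: both invoke Lemma~\ref{consecutive2} with $y{-}1$ as the larger symbol to force at least $k-1 \geq 1$ copies of $y{-}1$ to the left of $x$, and then read off the forbidden $213$ from the subsequence $(y{-}1)\,x\,y$. Your added remarks on why both patterns and the hypothesis $k \geq 2$ are needed are accurate but go beyond what the paper states.
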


\begin{proof}
    Let $\beta \in \avoid[k]{n}{122, 213}$ and suppose that $x$ occurs before $y$ in $\beta$, where $x<y{-}1$. 
    By Lemma \ref{consecutive2}, at most one copy of $y{-}1$ occurs to the right of $x$, and thus at least $k-1$ copies of $y{-}1$ occur to the left of $x$. 
    But then $\beta$ contains $(y{-}1) \cdots x \cdots y$, which is $213$.
\end{proof}


Let $\alpha=a_1a_2 \cdots a_n$ be a word. We define $\alpha'=$ insert$(\alpha,x, i)$ to be the result of inserting the symbol $x$ into position $i$ of $\alpha$. That is, insert$(\alpha,x,i)=a_1a_2 \cdots a_{i-1}xa_{i} \cdots a_n$.  


\begin{prp}\label{prp:set}
    Let $n \geq 2$ and $k \geq 2$. Then $\beta \in \avoid[k]{n}{122, 213}$ if and only if one of the following is true:
    \begin{enumerate}
        \item[(1)] $\beta = n^{k-1} \alpha'$ for some $\alpha'=$ insert$(\alpha,n,i+1)$ with $0 \leq i \leq k-1$ and $\alpha \in \avoid[k]{n-1}{122, 213}$;
        \item[(2)] $\beta=n^{k-1}(n{-}1)^kn\gamma$ where $\gamma \in \avoid[k]{n-2}{122, 213}$.
    \end{enumerate}
\end{prp}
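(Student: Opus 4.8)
The plan is to prove both directions of the equivalence, following the same template as Proposition~\ref{prp:setk}, with Lemmas~\ref{consecutive2} and~\ref{order2} doing the structural work. Throughout, I write $n$ for the largest symbol and use repeatedly that deleting symbols from a word preserves avoidance of $122$ and $213$, so any sub-word of an element of $\avoid[k]{n}{122,213}$ again avoids these patterns.

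For the forward direction, I would first locate the copies of $n$. Applying Lemma~\ref{consecutive2} with $y=n$ shows that at most one $n$ can lie to the right of the first non-$n$ symbol, so $\beta$ begins with at least $k-1$ copies of $n$; write $\beta = n^{k-1}w$, where $w$ contains exactly one remaining $n$. Next I would pin down the position of that last $n$ inside $w$: by Lemma~\ref{order2} the only symbol smaller than $n$ that may precede it is $n-1$, and since $w$ has no other $n$, every symbol occurring before it in $w$ must equal $n-1$. Hence $w = (n-1)^{j}\,n\,w'$ for some $0 \le j \le k$. I would then split on $j$. If $j \le k-1$, deleting the single $n$ from $w$ leaves a $k$-regular word $\alpha$ over $[n-1]$ that avoids the patterns (being a sub-word of $\beta$), and $\beta = n^{k-1}\,\mathsf{insert}(\alpha,n,j+1)$, which is case~(1) with $i=j$. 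If $j=k$, then all $k$ copies of $n-1$ precede the last $n$, so $\beta = n^{k-1}(n-1)^{k}n\gamma$ with $\gamma$ a $k$-regular word over $[n-2]$ avoiding the patterns, which is case~(2).

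For the backward direction, I would verify that words of each form avoid $122$ and $213$ by arguing that any occurrence must involve a copy of $n$ (otherwise it already occurs in the sub-word $\alpha$ or $\gamma$, since prefixing and inserting copies of $n$ does not reorder the smaller symbols), and that a copy of $n$, being maximal, can only play the largest role in either pattern. For case~(1): a $122$ of the shape $x\,n\,n$ would require two copies of $n$ to the right of a smaller symbol $x$, but every symbol smaller than $n$ has at most one $n$ to its right (the $k-1$ leading copies precede all smaller symbols, so only the inserted copy can follow a smaller symbol); a $213$ of the shape $b\,a\,n$ would require an inversion to the left of some $n$, and the only $n$ with smaller symbols on its left is the inserted one, whose left neighbours inside $\alpha'$ are the first $i$ symbols of $\alpha$. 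For case~(2) the analysis is identical, with the block $(n-1)^{k}$ and the trailing $n$ playing the analogous roles relative to $\gamma$, and with the leading $n$'s again preventing any smaller symbol from sitting left of two $n$'s.

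The main obstacle is the $213$ check in case~(1): I must show the symbols lying to the left of the inserted $n$ (apart from the leading $n$'s) form no inversion. This is exactly where the constraint $0 \le i \le k-1$ is used. Because $\alpha$ avoids $122$ and $213$, Lemma~\ref{consecutive2} applied to $\alpha$ (whose largest symbol is $n-1$) forces $\alpha$ to begin with at least $k-1$ copies of $n-1$; since $i \le k-1$, the first $i$ symbols of $\alpha$ are therefore all equal to $n-1$ and contain no inversion, so no new $213$ is created. For the boundary case $n=2$ one has $\alpha = 1^{k}$ directly, so the same conclusion holds. Allowing insertion beyond position $k$ could place a strictly smaller symbol to the left of the inserted $n$, reintroducing a $213$, which is precisely why the two forms split at $j=k$.
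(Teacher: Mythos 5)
Your proposal is correct and follows essentially the same route as the paper's proof: the forward direction fixes the $k{-}1$ leading copies of $n$ via Lemma~\ref{consecutive2} and locates the final copy of $n$ via Lemma~\ref{order2}, splitting on whether all $k$ copies of $n{-}1$ precede it, and the backward direction verifies avoidance in the same way. Your explicit justification of the $213$ check in case~(1) --- applying Lemma~\ref{consecutive2} to $\alpha$ so that its first $i \leq k-1$ symbols are all $n{-}1$, with the $n=2$ boundary handled separately --- matches the paper's observation that every $\alpha \in \avoid[k]{n-1}{122, 213}$ begins with $(n{-}1)^{k-1}$.
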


\begin{proof}
    We start by proving that any element of $\avoid[k]{n}{122, 213}$ is of form (1) or (2) above. Let $\beta \in \avoid[k]{n}{122, 213}$ where $k\geq 2$. By Lemma \ref{consecutive2}, for any $x<n$, at most one copy of $n$ occurs to the right of $x$, and thus at least $k-1$ copies of $n$ occur to the left of $x$. Therefore, $\beta$ must begin with $n^{k-1}$. Consider the position of the $k$th copy of $n$. The smallest symbol that can be to its left is $n{-}1$ by Lemma \ref{order2}. If all $k$ copies of $n{-}1$ occur before the $k$th copy of $n$, then $\beta=n^{k-1}(n{-}1)^kn\gamma$ where $\gamma \in \avoid[k]{n-2}{122, 213}$. 
    Otherwise, suppose $i<k$ copies of $n{-}1$ occur before the $k$th copy of $n$. Then $\beta$ must begin with $n^{k-1}(n{-}1)^in(n{-}1)^{k-i-1}$, since at least $k-1$ copies of $n{-}1$ must precede all smaller symbols by Lemma \ref{consecutive2}. Note that any element $\alpha$ of $\avoid[k]{n-1}{122, 213}$ begins with $(n{-}1)^{k-1}$, again by Lemma \ref{consecutive2}. Therefore, $\beta$ is of the form $n^{k-1} \alpha'$ for some $\alpha'=$ insert$(\alpha,n,i+1)$ with $0 \leq i \leq k-1$ and $\alpha \in \avoid[k]{n-1}{122, 213}$.


    We now show that any element described by (1) or (2) is a member of $\avoid[k]{n}{122, 213}$. Consider a word of the form $n^{k-1}(n{-}1)^kn\gamma$ where $\gamma \in \avoid[k]{n-2}{122, 213}$. 
    The prefix $n^{k-1}(n{-}1)^kn$ does not contain $122$ or $213$, and since only smaller symbols occur in $\gamma$, $n^{k-1}(n{-}1)^kn\gamma$ also avoids these patterns. Therefore, $n^{k-1}(n{-}1)^kn\gamma \in \avoid[k]{n}{122, 213}$. Similarly, suppose $\alpha$ is in $\avoid[k]{n-1}{122, 213}$, and $\alpha'=$ insert$(\alpha,n,i+1)$, where $0 \leq i \leq k-1$. Consider $n^{k-1}\alpha'$. Since $\alpha \in \avoid[k]{n-1}{122, 213}$, $\alpha$ avoids the patterns $122$ and $213$. Inserting a copy of $n$ within $(n{-}1)^{k-1}$ to create $\alpha'$ will not create either of these patterns, nor will inserting $n^{k-1}$ in front of $\alpha'$. Thus, $n^{k-1}\alpha'\in \avoid[k]{n}{122, 213}$. 
\end{proof}

The following remark and corollary complete our proof of Theorem \ref{thm:b} for $k$-Fibonacci words.

\begin{rem}\label{base2}
    The set $\avoid[k]{0}{122, 213}=\{\epsilon\}$, where $\epsilon$ is the empty word. The set $\avoid[k]{1}{122, 213}=\{1^k\}$. 
\end{rem}

\begin{cor} \label{cor:kFib}
    This holds for $n \geq 2$ and $k \geq 2$ where $|\avoid[k]{0}{122, 213}|=1$ and $|\avoid[k]{1}{122, 213}|=~1$,
    \begin{equation}
    |\avoid[k]{n}{122, 213}| = k \cdot|\avoid[k]{n-1}{122, 213}| +  |\avoid[k]{n-2}{122, 213}|.
    \end{equation}
\end{cor}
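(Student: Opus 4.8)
The plan is to turn the structural dichotomy of Proposition~\ref{prp:set} into the additive recurrence by checking that its two forms partition $\avoid[k]{n}{122, 213}$ and then counting each block. Since Proposition~\ref{prp:set} is an \emph{if and only if}, every $\beta \in \avoid[k]{n}{122,213}$ is of form~(1) or form~(2); the first thing I would verify is that these are mutually exclusive. This follows by counting how many copies of $n{-}1$ precede the $k$th copy of $n$: form~(2) has all $k$ copies of $n{-}1$ before the final $n$, whereas form~(1) has only $i \leq k-1$ of them. Hence the two counts add with no overlap, and it remains to enumerate each form.

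Form~(2) is immediate: the assignment $\gamma \mapsto n^{k-1}(n{-}1)^k n \gamma$ is a bijection between $\avoid[k]{n-2}{122, 213}$ and the words of form~(2), so these contribute exactly $|\avoid[k]{n-2}{122, 213}|$ words. For form~(1), I would show that $(\alpha, i) \mapsto n^{k-1}\,\text{insert}(\alpha, n, i{+}1)$ is a bijection from pairs with $\alpha \in \avoid[k]{n-1}{122,213}$ and $0 \leq i \leq k-1$ onto the words of that form. Surjectivity is exactly the content of Proposition~\ref{prp:set}(1), and there are $k$ admissible values of $i$ for each $\alpha$, so this block has size $k \cdot |\avoid[k]{n-1}{122, 213}|$ \emph{provided} the map is injective.

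Injectivity is the one step that needs genuine care, and I expect it to be the crux. The key is Lemma~\ref{consecutive2}, which forces every $\alpha \in \avoid[k]{n-1}{122,213}$ to begin with $(n{-}1)^{k-1}$; consequently the inserted $n$ always lands inside this leading block, so $\alpha'$ begins with $(n{-}1)^{i}\,n\,(n{-}1)^{k-1-i}$. Reading off the number of $(n{-}1)$'s before this interior $n$ recovers $i$, and then deleting that $n$ together with the prefix $n^{k-1}$ recovers $\alpha$; thus distinct pairs yield distinct words. Adding the two disjoint blocks gives
\begin{equation*}
|\avoid[k]{n}{122, 213}| = k \cdot |\avoid[k]{n-1}{122, 213}| + |\avoid[k]{n-2}{122, 213}|
\end{equation*}
for $n \geq 2$, and Remark~\ref{base2} supplies the base values $|\avoid[k]{0}{122,213}| = |\avoid[k]{1}{122,213}| = 1$, completing the argument.
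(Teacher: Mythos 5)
Your proposal is correct and follows essentially the same route as the paper: both deduce the recurrence from Proposition~\ref{prp:set} together with the base cases in Remark~\ref{base2}, counting $k$ words of form (1) per element of $\avoid[k]{n-1}{122,213}$ and one word of form (2) per element of $\avoid[k]{n-2}{122,213}$. Your explicit verification of disjointness (via the number of copies of $n{-}1$ preceding the $k$th copy of $n$) and of injectivity of $(\alpha,i)\mapsto n^{k-1}\,\mathsf{insert}(\alpha,n,i{+}1)$ (via Lemma~\ref{consecutive2} forcing $\alpha$ to begin with $(n{-}1)^{k-1}$) spells out steps the paper leaves implicit, but it is the same argument.
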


\begin{proof}
By Proposition \ref{prp:set}, the elements of $\avoid[k]{n}{122, 213}$ can be constructed as follows:
\begin{itemize}
    \item Create $k \cdot|\avoid[k]{n-1}{122, 213}|$ elements of $\avoid[k]{n}{122, 213}$ by placing $n^{k-1}$ in front of each element of $\alpha'=$ insert$(\alpha,n,i+1)$, for each $\alpha \in \avoid[k]{n-1}{122, 213}$ and each $0 \leq i \leq k-1$;
    \item Create $|\avoid[k]{n-2}{122, 213}|$ elements of $\avoid[k]{n}{122, 213}$ by placing a prefix of the form $n^{k-1}(n{-}1)^kn$ in front of each element of $\avoid[k]{n-2}{122, 213}$. 
\end{itemize}

Therefore, with Remark \ref{base2} as a base case, we have our result. 
Note that in the second step of constructing $\avoid[k]{2}{122, 213}=\{2^{k-1}1^b21^{k-b}: 0 \leq b \leq k\}$, we create $|\avoid[k]{0}{122, 213}|=1$ element by placing $2^{k-1}1^k2$ in front of the empty word, to obtain $2^{k-1}1^k2\cdot \epsilon=2^{k-1}1^k2$.
\end{proof}

\section{Pattern Avoidance for the Fibonacci-Squared Sequence}
\label{sec:vincular}

One motivation of this paper is to promote the use of $k$-regular words in the pattern avoidance community.
To further this goal, we demonstrate here that $k$-regular words can be combined with non-classical patterns to produce interesting enumeration results.

A \emph{vincular pattern} allows pairs of adjacent symbols in the pattern to be specified as being \emph{attached}, meaning that a subword only contains the pattern if its corresponding pair of symbols are adjacent.
Theorem \ref{thm:c} (see Section \ref{sec:intro_regular_vincular}) modifies Theorem \ref{thm:a} by forcing the $121$ pattern in $\avoid[2]{n}{121,123,132,213}$ to be fully attached or consecutive (i.e., both $12$ and $21$ are attached), and states that the resulting Fibonacci-squared words are counted by the Fibonacci-squared sequence $c(n) = a_1(n)^2$.


The Fibonacci-squared sequence is known to satisfy a number of different formulae, including the following three term recurrence (see \OEIS{A007598}).
\begin{equation} \label{eq:cprime}
c'(n) = 2 \cdot c'(n{-}1) + 2 \cdot c'(n{-}2) - c'(n{-}3) \text{ with } c'(0) = c'(1) = 1 \text{ and } c'(2)= 4.
\end{equation}
We begin this section by proving that the Fibonacci-squared sequence also obeys the following recurrence.
\begin{equation} \label{eq:cprimeprime}
c''(n) = c''(n{-}1) + 3 \cdot c''(n{-}2) + 2\cdot \sum_{i=3}^{n} c''(n-i) \text{ with } c''(0) = c''(1) = 1.
\end{equation}

\begin{lem} \label{lem:recurrences}
The Fibonacci-squared numbers follow the recurrences in \eqref{eq:cprime} and \eqref{eq:cprimeprime}.  
That is, $c(n) = c'(n) = c''(n)$ for all $n \geq 0$. 
\end{lem}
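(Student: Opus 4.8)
The plan is to show that the three sequences $c$, $c'$, $c''$ coincide by verifying that they share the same initial values and the same third-order linear recurrence, namely \eqref{eq:cprime}. First I would record that all three begin $1,1,4$. The values $c(0)=c(1)=1$ and $c(2)=4$ follow from $a_1(0)=a_1(1)=1$, $a_1(2)=2$ and $c(n)=a_1(n)^2$; the values of $c'$ are given; and for $c''$ we have $c''(0)=c''(1)=1$ by definition, while $c''(2)=c''(1)+3\,c''(0)=4$ because the sum in \eqref{eq:cprimeprime} is empty when $n=2$.

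The substantive step is to show that $c$ itself satisfies \eqref{eq:cprime}. Writing $f_j = a_1(j)$, so that $f_j=f_{j-1}+f_{j-2}$ and $c(n)=f_n^2$, I would establish the identity
\[
f_n^2 = 2 f_{n-1}^2 + 2 f_{n-2}^2 - f_{n-3}^2 \qquad (n \geq 3).
\]
Expanding $f_n^2 = (f_{n-1}+f_{n-2})^2 = f_{n-1}^2 + 2 f_{n-1} f_{n-2} + f_{n-2}^2$ reduces the claim to the cross-term identity $2 f_{n-1} f_{n-2} = f_{n-1}^2 + f_{n-2}^2 - f_{n-3}^2$, and substituting $f_{n-3}=f_{n-1}-f_{n-2}$ makes the right-hand side collapse exactly to $2 f_{n-1} f_{n-2}$. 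This is the one place where the squaring interacts with the linear recurrence, so I expect it to be the main obstacle, though it is a short computation once set up this way. (Alternatively one could argue via Binet's formula, using that the characteristic roots of $f_n^2$ are $\phi^2,\,\psi^2,\,\phi\psi=-1$, whose minimal polynomial is $x^3-2x^2-2x+1$.)

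Next I would show that \eqref{eq:cprimeprime} forces the same recurrence by eliminating the cumulative sum. Setting $S(n)=\sum_{i=3}^{n} c''(n-i)=\sum_{j=0}^{n-3} c''(j)$, we have $S(n)-S(n-1)=c''(n-3)$. Subtracting the instance of \eqref{eq:cprimeprime} at $n-1$ from the instance at $n$ telescopes the sum and yields $c''(n) = 2 c''(n-1) + 2 c''(n-2) - c''(n-3)$ for all $n \geq 3$. Thus $c''$ obeys \eqref{eq:cprime} as well, with the same first three terms $1,1,4$.

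Finally, since $c$, $c'$, and $c''$ all satisfy the third-order linear recurrence \eqref{eq:cprime} for $n \geq 3$ and agree at $n=0,1,2$, a routine induction on $n$ gives $c(n)=c'(n)=c''(n)$ for all $n \geq 0$, which is the statement of Lemma \ref{lem:recurrences}. Everything beyond the square identity is bookkeeping: checking the three base values and carrying out the telescoping difference.
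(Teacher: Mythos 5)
Your proposal is correct, but it is organized differently from the paper's proof and actually proves more. The paper dispenses with the equality $c(n)=c'(n)$ by citation (it is recorded in \textsc{Oeis} \OEIS{A007598}) and devotes the proof entirely to $c'(n)=c''(n)$, which it establishes by strong induction: it rewrites $c'(n)=2c'(n-1)+2c'(n-2)-c'(n-3)$, applies the inductive hypothesis, expands one copy of $c''(n-1)$ via \eqref{eq:cprimeprime}, and collects terms until the right-hand side of \eqref{eq:cprimeprime} at $n$ appears. You instead (i) prove $c=c'$ from scratch via the identity $f_n^2=2f_{n-1}^2+2f_{n-2}^2-f_{n-3}^2$, whose verification by substituting $f_{n-3}=f_{n-1}-f_{n-2}$ into the cross-term claim is correct, and (ii) difference \eqref{eq:cprimeprime} at $n$ and $n-1$ to telescope the cumulative sum, showing $c''$ satisfies the same third-order recurrence \eqref{eq:cprime}; then a single uniqueness-by-induction argument from the shared initial segment $1,1,4$ finishes all three equalities at once. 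Your telescoping is essentially the paper's inductive algebra run in reverse, but packaging it as ``both sequences satisfy the same linear recurrence with the same initial conditions'' is cleaner and more modular, and your step (i) makes the lemma self-contained where the paper leans on a citation; the paper's version buys brevity at the cost of that dependence. One small bookkeeping point worth making explicit if you write this up: the differencing in (ii) needs \eqref{eq:cprimeprime} to be valid at both $n$ and $n-1$, which holds for $n\geq 3$ since the sum in \eqref{eq:cprimeprime} is empty at $n=2$, exactly matching the range $n\geq 3$ where your square identity applies.
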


\begin{proof}
As previously mentioned, $c(n) = c'(n)$ is known (see \OEIS{A007598}) and we prove the second equality.
First observe that $c'(0)=1=c''(0)$ and $c'(1)=1=c''(1)$, and that $c''(2)=c''(1)+3c''(0)=4$, which is equal to $c'(2)$.
Suppose that $c'(k)=c''(k)$ for all $k$ with $3 \leq k \leq n-1$ and consider $c'(n)=2\cdot c'(n-1)+2\cdot c'(n-2)-c'(n-3)$. This is equal to 
    
$c'(n-1)+2\cdot c'(n-2)-c'(n-3)+c'(n-1)$, which by the inductive hypothesis, 
\begin{align*}
&= c''(n-1)+2\cdot c''(n-2)-c''(n-3)+c''(n-1)\\
&= c''(n-1)+2\cdot c''(n-2)-c''(n-3)+c''(n-2)+3 \cdot c''(n-3)+2\cdot \sum_{i=3}^{n-1} c''(n-1-i)\\
&= c''(n-1)+3 \cdot c''(n-2)+2 \cdot c''(n-3)+2\cdot \sum_{i=3}^{n-1} c''(n-1-i)\\
&= c''(n-1)+3 \cdot c''(n-2)+2\cdot \sum_{i=3}^{n} c''(n-i)=c''(n). \qed
\end{align*}

Ultimately, we will prove that Fibonacci-squared words can be decomposed into smaller words according to \eqref{eq:cprimeprime}.
Towards this decomposition, we make several definitions involving regular words below.

Given a non-empty $k$-regular word over $[n]$, its \emph{base} is its longest suffix that is a $k$-regular word over~$[m]$ for some $m < n$.
That is, if $\gamma \in \PERMS[k]{n}$ is non-empty, then its base is the word $\beta \in \PERMS[k]{m}$ that maximizes $m$ subject to $\gamma = \alpha \beta$ and $0 \leq m < n$.
The assumption that $\gamma \in \PERMS[k]{n}$ is non-empty is equivalent to $n > 0$ (with the implicit assumption that $k \geq 1$).
The base is well-defined for all non-empty $k$-regular words.
This is because all such $\gamma$ have a shortest suitable suffix, namely $\beta = \epsilon$.
Our definition also ensures that the base is a \emph{strict suffix} (i.e., it is not equal to entire word).
As a result, the remaining prefix is non-empty, and we refer to it as the \emph{annex}.
Equivalently, the annex is the shortest non-empty prefix consisting of $k$ copies of each symbol in $\{n, n-1, \ldots, m+1\}$ for some $m < n$.
Given a non-empty regular word $\gamma \in \PERMS[k]{n}$, its \emph{standard partition} is $\gamma = \alpha \beta$, where $\beta$ is its base and $\alpha$ is its annex.

Standard partitions are illustrated for several regular words $\gamma$ below.
When considering these examples, remember that the base $\beta$ is chosen to be as long as possible without equalling $\gamma$ and that $\beta = \epsilon$ is allowed.

\begin{align*}
\gamma = \underbracket{\,6\,5\,6\,5\,5\,6\,}_{\mbox{\normalsize$\alpha$}}\underbracket{\,4\,4\,3\,2\,3\,4\,3\,2\,2\,1\,1\,1\,}_{\mbox{\normalsize$\beta$}} \in \PERMS[3]{6} 
&&
\gamma = \underbracket{\,1\,2\,3\,4\,5\,6\,}_{\mbox{\normalsize$\alpha$}}\underbracket{\phantom{\epsilon\epsilon\epsilon}}_{\mbox{\normalsize$\beta$}} \in \PERMS[1]{6} 
&&
\gamma = \underbracket{\,3\,3\,3\,}_{\mbox{\normalsize$\alpha$}}\underbracket{\,2\,2\,2\,1\,1\,1\,}_{\mbox{\normalsize$\beta$}} \in \PERMS[3]{3}
\end{align*}

Now we return our attention to Fibonacci-squared words, which are a subset of the $2$-regular words.
Since a Fibonacci-squared word avoids $\overline{121}$, $123$, $132$, $213$, so too does every prefix and suffix of it.
In particular, the both the annex and the base of any Fibonacci-squared word avoid all four patterns.
Due to the structure of the patterns, the converse is also true.
\end{proof}

\begin{lem} \label{lem:composition}
If a non-empty $2$-regular word $\gamma \in \PERMS[2]{n}$ has standard partition $\gamma = \alpha \beta$ and both $\alpha$ and $\beta$ avoid the patterns $\vincular{121}$, $123$, $132$, $213$, then $\gamma$ is a Fibonacci-squared word.
That is, $\gamma \in \avoid[2]{n}{\vincular{121},123,132,213}$.
\end{lem}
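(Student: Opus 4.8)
The plan is to prove the stated direction directly: assuming that the annex $\alpha$ and the base $\beta$ each avoid all four patterns, I will show that $\gamma = \alpha\beta$ does as well. The single structural fact driving everything is the \emph{value separation} of the standard partition. By definition the base $\beta \in \PERMS[2]{m}$ uses exactly the symbols $\{1,2,\ldots,m\}$, each twice, so the annex $\alpha$ consists of exactly the symbols $\{m+1,\ldots,n\}$, each twice (as the excerpt already notes). Consequently every symbol occurring in $\alpha$ is strictly larger than every symbol occurring in $\beta$, and $\alpha$ is a prefix of $\gamma$ while $\beta$ is the complementary suffix. Since an occurrence of any pattern depends only on the relative order of the chosen positions, any occurrence lying entirely inside $\alpha$ or entirely inside $\beta$ is already excluded by the hypothesis that $\alpha$ and $\beta$ avoid the patterns. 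Thus the whole proof reduces to showing that no occurrence of any of the four patterns can \emph{straddle} the boundary between the annex and the base.

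For the three classical patterns $123$, $132$, $213$, the key step is to pin down what a straddling occurrence must look like. Suppose an occurrence of one of these patterns (which necessarily has three distinct values) uses positions from both parts. Because $\alpha$ is a prefix and each of its symbols exceeds each symbol of $\beta$, the positions chosen from $\alpha$ occupy the initial slot(s) of the occurrence and carry the larger values. If exactly one chosen position lies in $\alpha$, then the first symbol of the occurrence is its strict maximum; if exactly two lie in $\alpha$, then the last symbol is its strict minimum. But in each of $123$, $132$, $213$ the first symbol is never the largest and the last symbol is never the smallest. Hence no straddling occurrence can be order-isomorphic to any of these three patterns, and combined with the within-part exclusion this yields avoidance of $123$, $132$, and $213$.

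For the vincular pattern $\vincular{121}$ I would exploit that it is a \emph{consecutive} pattern $xyx$ with $y>x$, so any occurrence occupies three consecutive positions. Writing $\alpha = a_1 \cdots a_p$ and $\beta = b_1 \cdots b_q$, the only consecutive triples meeting both parts are $a_{p-1}\,a_p\,b_1$ and $a_p\,b_1\,b_2$ (if $\beta = \epsilon$ there are none and $\gamma = \alpha$ is immediately done). In each such triple the first and third entries lie in different parts, so by value separation they are unequal; but $\vincular{121}$ requires its first and third symbols to coincide. Therefore no straddling occurrence exists, the within-part cases are excluded by hypothesis, and so $\gamma$ avoids $\vincular{121}$. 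Together with the previous paragraph this gives $\gamma \in \avoid[2]{n}{\vincular{121},123,132,213}$.

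The main obstacle is conceptual rather than computational. The crucial realization is that the consecutiveness of $\vincular{121}$ is precisely what makes its boundary analysis trivial, reducing an a priori unbounded search over subwords to two fixed triples, each of which is killed instantly by value separation. For the classical patterns, the subtlety is recognizing that the specific triple $\{123,132,213\}$ is exactly the set of length-$3$ patterns whose first symbol is not maximal and whose last symbol is not minimal, which is what forbids them from straddling a boundary across which all values descend. Once the value-separation principle is established, checking the boundary edge cases (empty base, or an annex or base of minimal length) is routine.
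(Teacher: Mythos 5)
Your proof is correct and follows essentially the same route as the paper's: both rest on the value separation of the standard partition (every symbol of $\alpha$ exceeds every symbol of $\beta$), which rules out any occurrence of the four patterns straddling the annex--base boundary. The paper compresses this into the single observation that each pattern contains a smaller symbol followed by a larger one, whereas you spell out the case analysis (first-symbol-not-maximal / last-symbol-not-minimal for $123$, $132$, $213$, and the two consecutive boundary triples for $\vincular{121}$), which makes the same argument airtight in more detail.
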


\begin{proof}
Each of the four patterns has a smaller symbol followed by a larger symbol. 
If $\alpha$ and $\beta$ each avoid these patterns, then it is impossible for $\alpha$ followed by $\beta$ to contain any of these patterns, since the symbols in $\alpha$ are all larger than the symbols in $\beta$. 
\end{proof}

Now we make final preparations before proving the main result of this section.
Our proof considers an arbitrary Fibonacci-squared word $\gamma \in \avoid[2]{n}{\vincular{121},123,132,213}$ from left-to-right.
We argue that its annex is either one of four special cases, or can be described as a path within a particular tree that we now define.
The \emph{prefix-tree} $\prefixTree{n}$ is a bi-rooted tree 
with $4(n-1)$ vertices 
with labels taken from $[n]$.
It contains a primary path that follows a \emph{two steps forward one step back} pattern from $n$ down to $1$.
\begin{equation} \label{eq:2forward1backward}
n, n-2, n-1, n-3, n-2, n-4, n-3, \ldots, 5, 6, 4, 5, 3, 4, 2, 3, 1.
\end{equation}


Every second node $i$ on the primary path (i.e., $n-2, n-3, n-4, \ldots, 2, 1$) has an edge to another copy of $i$ which has an edge to another copy of $i+1$.
Finally, one root is labeled $n$ and has an edge to another copy of $n-1$, while a second root is labeled $n-1$ and has an edge to another copy of $n$, and both of these non-root nodes have an edge to the start of the primary path.
In the prefix tree, the label 1 appears twice, the labels 2 and $n$ appear three times, and all other labels appear 4 times. Thus the tree contains $2+2\cdot3+4(n-3)=8+4n-12=4(n-1)$ vertices.
Figure \ref{fig:proofc} illustrates $\prefixTree{n}$ and $\prefixTree{4}$.


\begin{theorem} \label{thm:cprimeprime}
    Any word $ \gamma \in \avoid[2]{n}{\vincular{121},123,132,213}$ has the standard partition $\alpha \beta$ where $\alpha$ is either a root-to-leaf path on the prefix tree, or is one of the four special cases $nn, (n{-}1)(n{-}1)nn$, $(n{-}1)nn(n{-}1)$, $n(n{-}1)(n{-}1)n$.

\end{theorem}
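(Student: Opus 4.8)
The plan is to analyze the annex $\alpha$ of the standard partition $\gamma=\alpha\beta$ directly, showing that reading $\alpha$ from left to right forces it into one of the listed forms. Two facts drive everything. First, since $\alpha$ is a prefix of $\gamma$ it avoids all four patterns, so Lemma~\ref{lem:order} applies to $\alpha$: the smallest symbol that may precede a value $y$ is $y{-}1$; equivalently, both copies of every symbol larger than $y$ appear before the first copy of any symbol smaller than $y{-}1$. Second, by the definition of the standard partition, $\alpha$ is the \emph{shortest} nonempty prefix of $\gamma$ that is a $2$-regular word over a top segment $\{m{+}1,\dots,n\}$ (otherwise $\beta$ would not be the longest bottom-segment suffix). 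I will use this minimality repeatedly: the instant a prefix of $\alpha$ uses both copies of every symbol in some top segment $\{i,\dots,n\}$, the annex must end there, since anything following would be a $2$-regular word over a bottom segment of $\alpha$'s alphabet.

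First I would run a short case analysis on the opening symbols. By Lemma~\ref{lem:order} the first symbol is $n$ or $n{-}1$ (the two roots of $\prefixTree{n}$). Tracking which copies of $n$ and $n{-}1$ remain and combining Lemma~\ref{lem:order} with $\vincular{121}$-avoidance (which forbids a consecutive low-high-low triple, in particular $(n{-}1)\,n\,(n{-}1)$) forces exactly the following. The openings $nn$, $(n{-}1)(n{-}1)$, and $n(n{-}1)(n{-}1)$ close off at once: by minimality they yield precisely the special cases $nn$, $(n{-}1)(n{-}1)nn$, and $n(n{-}1)(n{-}1)n$. The opening $(n{-}1)nn$ may either close off as the special case $(n{-}1)nn(n{-}1)$ or continue, while the opening $n(n{-}1)n$ is forced to continue, since placing the last $n{-}1$ immediately would create $(n{-}1)\,n\,(n{-}1)=\vincular{121}$. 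The two surviving descending openings $n(n{-}1)n$ and $(n{-}1)nn$ are exactly the two root-neighbor-to-$v_1$ entries into the primary path.

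Next I would prove the descent by induction on the current lowest active value. The key is an invariant describing the recurring \emph{branch-point state}: immediately after the first copy of a new lowest symbol $i$ is placed, both copies of every larger symbol are already down except for a single pending copy of $i{+}1$. From this state there are exactly two legal moves. Placing the second copy of $i$ forces the second copy of $i{+}1$ to follow (Lemma~\ref{lem:order} forbids introducing $i{-}1$ before $i{+}1$ is complete); this spells the two symbols of a side branch, completes the top segment $\{i,\dots,n\}$, and so terminates $\alpha$ by minimality—this is turning off the primary path at the node labeled $i$. Alternatively, completing $i{+}1$ first and then introducing $i{-}1$ continues along the primary path; here $\vincular{121}$-avoidance rules out reusing $i$ (the triple $i\,(i{+}1)\,i$ is forbidden) and Lemma~\ref{lem:order} again forbids jumping to $i{-}1$ prematurely. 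A quick check of the bookkeeping shows this continuation reinstates the branch-point state with $i$ decreased by one (the old low $i$ becomes the new single pending symbol), closing the induction. At the bottom, $i=1$ admits no continuation, so the walk is forced to take the final branch, matching the primary-path ending $\dots,2,3,1$. Reading off the symbols along this forced walk reproduces the two-steps-forward-one-step-back primary sequence together with one side branch, i.e.\ a root-to-leaf path of $\prefixTree{n}$.

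The main obstacle I anticipate is the inductive descent step: verifying that the branch-point state admits \emph{only} the two tree moves and that the ``continue'' move exactly reinstates the invariant one level down, with correct bookkeeping of the two pending symbols and the consecutive-triple constraints. Additional care is needed to argue, via minimality of the annex, that termination occurs precisely when a top segment is completed (and never earlier), and to handle the mild asymmetry between the two roots in the opening three symbols together with the forced bottom-of-tree termination at $i=1$.
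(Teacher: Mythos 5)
Your proposal is correct and takes essentially the same route as the paper's proof: the same case analysis of openings over $\{n{-}1,n\}$ yielding the four special annexes and the two roots, followed by the same forced two-choice descent (terminate via a side branch or continue the two-steps-forward-one-step-back primary path), driven by Lemma~\ref{lem:order} together with $\vincular{121}$-avoidance. Your explicit branch-point invariant and the appeal to annex minimality for termination merely formalize what the paper phrases informally as ``we are now in the same position as in the previous step.''
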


\begin{proof} We describe all possible annexes for a word $ \gamma \in \avoid[2]{n}{\vincular{121},123,132,213}$. 
Observe first that $nn$ is an annex. Next suppose $\gamma \in \avoid[2]{n}{\vincular{121},123,132,213}$ and $\gamma$ does not begin with $nn$.
By Lemma \ref{lem:order}, the only element that can precede $n$ is $n{-}1$. 
We first consider length-4 prefixes where all symbols are from $\{n{-}1, n\}$. 
There are $\frac{4!}{2!2!}=6$ possible such prefixes; 5 that do not start with $nn$. 
Of the remaining possibilities, both $(n{-}1)n(n{-}1)n$ and $n(n{-}1)n(n{-}1)$ contain $\vincular{121}$, and thus are excluded.
This leaves three possibilities, $(n{-}1)(n{-}1)nn$, $(n{-}1)nn(n{-}1)$, and $n(n{-}1)(n{-}1)n$, which are all annexes. 

The annexes described so far are the four special cases not generated by the prefix tree. Next suppose $\gamma \in \avoid[2]{n}{\vincular{121},123,132,213}$ and the first four symbols are not all from $\{n{-}1, n\}$. Both copies of $n$ must occur in the first four positions since the only symbol that can precede $n$ is $n{-}1$. The only symbol that can precede $n{-}1$ is $n{-}2$. Therefore, if $\gamma$ is not in one of the special cases, the first four symbols of $\gamma$ consist of two copies of $n$, one copy of $n{-}1$, and one copy of $n{-}2$. 
Since $n{-}2$ cannot precede $n$, $n{-}2$ cannot occur in the first two positions. Further, $n{-}2$ in the third position results in  $nn(n{-}2)(n{-}1)$, the already considered annex $nn$. Thus, $n{-}2$ can only be in the fourth position. There are two ways to place $n$, $n$, $n-1$ in the first three positions that avoid the annex $nn$. 
Thus, if $\gamma$ does not begin with one of the four special cases, then $\gamma$ begins with $(n{-}1)nn(n{-}2)$ or $n(n{-}1)n(n{-}2)$. Observe that the only difference between these two strings is whether the first two symbols are $(n{-}1)n$ or $n(n{-}1)$, the two roots of the prefix tree.


Continuing from $(n{-}1)n$ or $n(n{-}1)$, the remaining copy of $n{-}1$ can only be preceded by $n{-}2$. 
Thus, there are only two ways for $\gamma$ to continue: $(n{-}2)(n{-}1)$ or $(n{-}1)$. The first case results in the annexes  $(n{-}1)nn(n{-}2)(n{-}2)(n{-}1)$ and $n(n{-}1)n(n{-}2)(n{-}2)(n{-}1)$, corresponding with the first leaf on the prefix tree.
In the second case, $(n{-}1)nn(n{-}2)$ or $n(n{-}1)n(n{-}2)$ continues with $(n{-}1)$. 
The symbol $n{-}2$ is not yet paired, and another $n{-}2$ cannot follow immediately, because this would create the forbidden $\vincular{121}$.  All copies of symbols greater than $n{-}2$ have already occurred, and the smallest symbol that can proceed $n{-}2$ is $n{-}3$, so $n{-}3$ must be next. It can either be that both copies of $n{-}3$ precede the second copy of $n{-}2$, or a single copy precedes $n{-}2$. 
The first case creates the annexes $(n{-}1)nn(n{-}2)(n{-}1)(n{-}3)(n{-}3)(n{-}2)$ and $n(n{-}1)n(n{-}2)(n{-}1)(n{-}3)(n{-}3)(n{-}2)$, corresponding with the second leaf on the prefix tree. 
Otherwise, we continue down the primary path with a single copy of $n{-}3$ preceding $n{-}2$.

We are now in the same position as in the previous step. The $n{-}3$ is unpaired, but placing $n{-}3$ immediately would create $\vincular{121}$. We can either continue to the next leaf with $(n{-}4)(n{-}4)(n{-}3)$, creating two annexes, or continue down the primary path with $(n{-}4)(n{-}3)$, in which case we are in an equivalent position.

At each step, root $(n{-}1)nn \cdots $ or $n(n{-}1)n \cdots $ ends with $(n{-}k+1)(n-{k}+2)$, and can either continue with the leaf $(n{-}k)(n{-}k)(n{-}k+1)$, or down the primary path with $(n{-}k)(n{-}k+1)$. The first case creates two annexes. The second case, continues down the primary path and encounters an equivalent pair of choices.  This pattern continues down to $k=n{-}1$, where the annex is now a word in $\avoid[2]{n}{\vincular{121},123,132,213}$, and the base is the empty word. We have found all possible annexes for $\gamma$. 
\end{proof}

Corollary \ref{cor:cprimeprime} restates Theorem \ref{thm:c} using Lemma \ref{lem:recurrences}.

\begin{cor} \label{cor:cprimeprime}
    $c''(n) = |\avoid[2]{n}{\vincular{121},123,132,213}|$ for all $n \geq 0$.
\end{cor}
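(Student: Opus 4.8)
The plan is to show that the counting function $d(n) := |\avoid[2]{n}{\vincular{121},123,132,213}|$ satisfies recurrence \eqref{eq:cprimeprime}, and then to invoke Lemma \ref{lem:recurrences} to identify $d(n)$ with $c''(n) = c(n)$. The two base cases $d(0) = 1$ and $d(1) = 1$ are immediate, since the only qualifying words of lengths $0$ and $2$ are $\epsilon$ and $11$. For $n \geq 2$ the task is to read the recurrence off of the structural decomposition supplied by Theorem \ref{thm:cprimeprime}.

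First I would recast the standard partition as a counting bijection. Every non-empty $\gamma \in \avoid[2]{n}{\vincular{121},123,132,213}$ has a unique standard partition $\gamma = \alpha\beta$, and Theorem \ref{thm:cprimeprime} classifies the annex $\alpha$. If $\alpha$ uses exactly the symbols $\{m{+}1, \ldots, n\}$ (each twice), then $\beta \in \PERMS[2]{m}$ is a suffix of $\gamma$, so it avoids all four patterns and hence $\beta \in \avoid[2]{m}{\vincular{121},123,132,213}$. Conversely, Lemma \ref{lem:composition} guarantees that concatenating any admissible annex over $\{m{+}1, \ldots, n\}$ with any $\beta \in \avoid[2]{m}{\vincular{121},123,132,213}$ yields a Fibonacci-squared word; and since $\alpha$ contains both copies of every symbol in $\{m{+}1,\ldots,n\}$ while $\beta$ uses only symbols in $[m]$, the base of $\alpha\beta$ is exactly $\beta$, so the standard partition recovers the pair $(\alpha,\beta)$. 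Writing $s(\alpha)$ for the number of distinct symbols in $\alpha$, this gives $d(n) = \sum_{\alpha} d(n - s(\alpha))$, summed over all admissible annexes.

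It then remains to tally the annexes by $s(\alpha)$ using Theorem \ref{thm:cprimeprime}. There is exactly one annex with $s = 1$, namely $nn$, contributing $d(n{-}1)$. There are exactly three special annexes with $s = 2$, namely $(n{-}1)(n{-}1)nn$, $(n{-}1)nn(n{-}1)$, and $n(n{-}1)(n{-}1)n$, contributing $3\,d(n{-}2)$. All remaining annexes are the root-to-leaf paths of the prefix tree $\prefixTree{n}$: because the tree is bi-rooted and its two roots feed into a shared primary path, each leaf is reached by exactly two paths, and the leaf completing the symbols down to $n{-}i{+}1$ yields an annex with $s = i$ for each $i$ from $3$ to $n$. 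These contribute $2\sum_{i=3}^{n} d(n{-}i)$, so altogether $d(n) = d(n{-}1) + 3\,d(n{-}2) + 2\sum_{i=3}^{n} d(n{-}i)$, which is exactly \eqref{eq:cprimeprime}. Induction on $n$ then gives $d(n) = c''(n)$ for all $n \geq 0$, and Lemma \ref{lem:recurrences} yields $c''(n) = c(n)$, completing the proof and, with it, Theorem \ref{thm:c}.

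I expect the most delicate point to be confirming that the annex-to-base map is a genuine bijection and not merely a surjection: one must check that reconstructing $\alpha\beta$ from an admissible annex and an arbitrary base returns $\alpha$ as the annex of its standard partition, so that words are not counted under annexes of two different lengths. This rests on the observation that any suffix of $\alpha\beta$ strictly longer than $\beta$ either splits a pair of equal symbols in $\alpha$ or uses a non-contiguous set of values, and hence is not a regular word over a smaller alphabet. The second delicate point is extracting the exact multiplicities from $\prefixTree{n}$ — in particular justifying the factor of $2$ for each $i \geq 3$ arising from the two roots sharing the primary path, and confirming that leaves occur precisely at the depths producing $s = 3, 4, \ldots, n$ — since any miscount here would break the match with \eqref{eq:cprimeprime}.
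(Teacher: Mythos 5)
Your proposal is correct and follows essentially the same route as the paper's own proof: strong induction on $n$, using the annex classification of Theorem \ref{thm:cprimeprime} (one annex on a single symbol, three special annexes on two symbols, and two root-to-leaf paths of the prefix tree $\prefixTree{n}$ contributing annexes on $i$ symbols for each $3 \leq i \leq n$) together with Lemma \ref{lem:composition} for the converse direction, which yields the recurrence \eqref{eq:cprimeprime} and then Lemma \ref{lem:recurrences}. Your extra verification that the base of $\alpha\beta$ is exactly $\beta$ --- so the annex--base correspondence is a genuine bijection rather than merely a surjection --- is left implicit in the paper and is a welcome point of added rigor, not a deviation in approach.
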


\begin{proof} We proceed by strong induction on $n$.
The base cases of $n=0$ and $n=1$ hold by the following.
\begin{itemize}
    \item $|\avoid[2]{0}{\vincular{121},123,132,213}| = |\{ \epsilon \}| = 1 = c''(0)$.
    \item $|\avoid[2]{0}{\vincular{121},123,132,213}| = |\{ 11 \}| = 1 = c''(1)$.
\end{itemize}

Now we must prove the following.
\begin{equation} \label{eq:cGoal}
|\avoid[2]{n}{\vincular{121},123,132,213}| = c''(n) = c''(n-1) + 3 \cdot c''(n-2) + 2 \sum_{i=3}^{n} c''(n-i).
\end{equation}

Consider each of the possible annexes for $\gamma \in \avoid[2]{n}{\vincular{121},123,132,213}|$. 
Words of the form $nn\beta$ where $\beta \in \avoid[2]{n{-}1}{\vincular{121},123,132,213}$ give $|\avoid[2]{n{-}1}{\vincular{121},123,132,213}|$ elements to $\avoid[2]{n}{\vincular{121},123,132,213}$. This accounts for the $c''(n-1)$ term in \eqref{eq:cGoal}.
The three annexes $(n{-}1)(n{-}1)nn$, $(n{-}1)nn(n{-}1)$, and $n(n{-}1)(n{-}1)n$ each contribute $|\avoid[2]{n{-}2}{\vincular{121},123,132,213}|$ elements to $\avoid[2]{n}{\vincular{121},123,132,213}$. This accounts for the $3c''(n-2)$ term in \eqref{eq:cGoal}.

For each $2 \leq k \leq n-1$, there are two root-to-leaf paths in the prefix tree. These are the annexes $(n{-}1)nn \cdots (n{-}k)(n{-}k)(n{-}k+1)$ and $n(n{-}1)n \cdots (n{-}k)(n{-}k)(n{-}k+1)$, each of which composes with elements of $\avoid[2]{n-k-1}{\vincular{121},123,132,213}$. These are the root-to-leaf paths in the prefix tree, and together, these contribute $\displaystyle 2 \cdot \sum_{i=3}^{n} c''(n-i)$ to $\eqref{eq:cGoal}$.
\end{proof}

\begin{figure}
    \centering
    \begin{subfigure}[T]{0.505\textwidth}
        \centering
        \fbox{\includegraphics[scale=0.975]{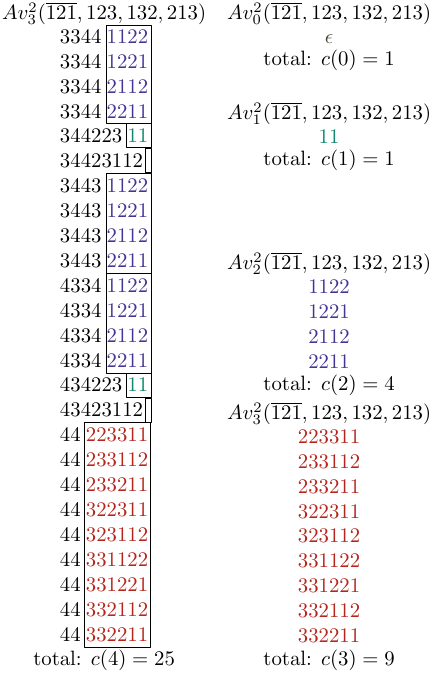}}
        \caption{
        Fibonacci-squared words (in lexicographic order).
        These words contain $2$ copies of $[n]$ and avoid the consecutive pattern $\vincular{121}$ and the classic patterns $123$, $132$, and $213$.
        Theorem \ref{thm:c} proves that they are enumerated by the Fibonacci-squared numbers $c(n) = a_1(n)^2$, which satisfy the recurrence $c''(n) = c''(n-1) + 3 \cdot c''(n-2) + 2\sum_{i=3}^{n}{c''(n-i)}$ as proven in Lemma \ref{lem:recurrences}.
        The lists above show how the $n=4$ words are constructed from two copies of the $n-4=0$ words (grey), two copies of the $n-3=1$ words (\greenText{green}), three copies of the $n-2=2$ words (\blueText{blue}), and one copy of the $n-1=3$ words (\redText{red}).
        The totals match $c(4) = c(3) + 3 \cdot c(2) + 2 \cdot c(1) + 2 \cdot c(0) = 9 + 3 \cdot 4 + 2 \cdot 1 + 2 \cdot 1 = 25$.
        }
        \label{fig:proofc_FibSquared}
    \end{subfigure}
    \hfill
    \begin{subfigure}[T]{0.485\textwidth}
        \centering
        \fbox{\includegraphics[align=t, scale=0.78]{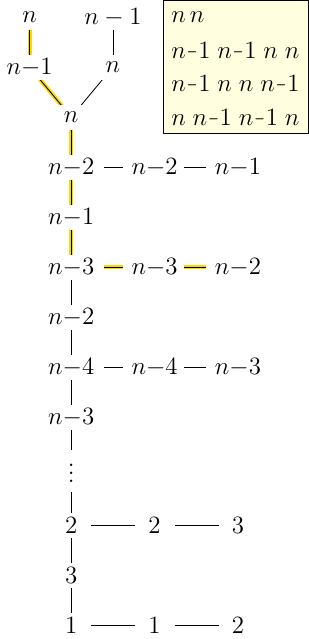}}
        \fbox{\includegraphics[align=t, scale=0.78]{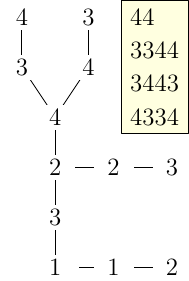}}
        \caption{The prefix-tree $\prefixTree{n}$ (left).
        It is drawn with its two roots ($n$ and $n-1$) at the top, and its $n-2$ leaves on the right.
        The tree's root-to-leaf paths encode annexes.
        For example, the highlighted path encodes the annex $\alpha' = n \, n{-}1 \, n \, n{-}2 \, n{-}1 \, n{-}3 \, n{-}3 \, n{-}2$.
        Thus, $\alpha' \beta' \in \avoid[2]{n}{\vincular{121}, 123, 132, 213}$ for any $\beta' \in \avoid[2]{n-4}{\vincular{121}, 123, 132, 213}$ by Lemma \ref{lem:composition}.
        Starting at the second copy of $n$ the vertical edges follow a \emph{two steps forward one step back} pattern downward, and horizontal edges complete the prefix with the two smallest and yet-to-be paired values.
        The inset shows the four short annexes (e.g., $n \; n$).
        The prefix-tree $\prefixTree{n}$ for $n=4$ (right).
        The four root-to-leaf paths give the annexes $434223$, $344223$, $43423112$, and $34423112$ found in (a).
        The remaining annexes are the special cases $44$ with $3344$, $3443$, and $4334$.
        }        
        \label{fig:proofc_forest}        
    \end{subfigure}
    \caption{Illustrating Theorem \ref{thm:c} and its proof.
    (a) The Fibonacci-squared words are $2$-regular words avoiding $\{\vincular{121}, 123, 132, 213\}$ and they are enumerated by the Fibonacci-squared numbers $c(n)$ (or equivalently, $c''(n)$). 
    Each word is written in its standard partition $\gamma = \alpha \beta$ where $\alpha$ is the annex and $\beta$ is the base.
    The bases are smaller Fibonacci-squared words, while each annex is one of four special cases or can be visualized as a labeled root-to-leaf path in the prefix-tree (b).
    }
    \label{fig:proofc}
\end{figure}

\section{Final Remarks}
\label{sec:final}

We considered three pattern avoiding results involving $k$-regular words and Fibonacci sequences.
This includes a simple proof of a known result for Fibonacci-$k$ words, a new result for $k$-Fibonacci words, and a new result for the Fibonacci-squared sequence.
We hope that these results, together with those for $k$-Catalan sequences (see Section \ref{sec:intro_regular}) and new results on $(1,b)$-based Pell numbers by \cite{hartung2024regular} will serve as inspiration for further study of pattern avoidance in regular words or even non-regular words (see \cite{burstein1998enumeration}).
We conclude with some additional comments and thoughts.

\subsection{Other Fibonacci Sequences with Parameter \texorpdfstring{$k$}{k}}
\label{sec:final_kk}

Pattern avoidance with $k$-regular words is particularly promising for sequence families with one parameter. 
Other Fibonacci sequences with this property include those using base cases $(1,k)$ or $(k,1)$ or $(k,k)$.
Another natural target is the $(1,1)$-based $k$-Fibonacci-$k$ sequence, which satisfies the following formula.
\begin{equation} \label{eq:d}
d_k(n) = k \cdot d_k(n-1) + k \cdot d_k(n-2) \text{ with } d_k(0) = 1 \text{ and } d_k(1) = 1.
\end{equation}
Interestingly, a pattern avoidance result for $d_2(n)$ was established using $d$-permutations by \cite{sun2022d}.
More specifically, $d_2(n)$ is the number of $3$-permutations of $[n]$ avoiding $231$ and $312$.
See \cite{bonichon2022baxter} for further information on $d$-permutations (which are not $d$-regular permutations).
These sequences are summarized in Table \ref{tab:ksequencesk}. Note $d_k(n)$ does not refer to the double list notion in Table \ref{tab:not}.

Another single parameter generalization is the \emph{$k$-generalized Fibonacci numbers} which involve summing the previous $k$ entries in the sequence.
Pattern avoidance results involving these numbers have been considered by \cite{egge2004231}. 


\begin{table}[ht]
\centering
\small
\begin{tabular}{|@{\;\;}r@{\,}c@{\,}r@{\;\;}c@{\;\;}l@{\;\;}r@{\;\;}|} \hline
\rowcolor{gray!20}
\multicolumn{3}{|c}{Recurrence $d(n)$} & Numbers & \multicolumn{1}{c}{\textsc{Oeis}}  & \multicolumn{1}{c@{\;\;}|}{Pattern Avoidance} \\ \hline
$d(n{-}1)$ &         $+$ & $d(n{-}2)$         & $1, 1, 2, 3, 5, 8, 13, 21, \ldots$ & \OEIS{A000045}$^{\dag}$ & $\avoid{n}{123,132,213}$ \\
$2 \cdot d(n{-}1)$ & $+$ & $2 \cdot d(n{-}2)$ & $1, 1, 4, 10, 28, 76, 208, 568, \ldots$ & \OEIS{A026150} & $S_n^{2}(231,312)$ \\
$3 \cdot d(n{-}1)$ & $+$ & $3 \cdot d(n{-}2)$ & $1, 1, 6, 21, 81, 306, 1161, 4401, \ldots$ & \OEIS{A134927} & \\
$4 \cdot d(n{-}1)$ & $+$ & $4 \cdot d(n{-}2)$ & $1, 1, 8, 36, 176, 848, 4096, 19776, \ldots$ & \OEIS{A164545}$^{\ddag}$ & \\ 
$5 \cdot d(n{-}1)$ & $+$ & $5 \cdot d(n{-}2)$ & $1, 1, 10, 55, 325, 1900, 11125, 65125, \ldots$ & \OEIS{A188168} & \\ 
$6 \cdot d(n{-}1)$ & $+$ & $6 \cdot d(n{-}2)$ & $1, 1, 12, 78, 540, 3708, 25488, 175176, \ldots$ &      & \\ 
$7 \cdot d(n{-}1)$ & $+$ & $7 \cdot d(n{-}2)$ & $1, 1, 14, 105, 833, 6566, 51793, 408513, \ldots$ &      & \\
$8 \cdot d(n{-}1)$ & $+$ & $8 \cdot d(n{-}2)$ & $1, 1, 16, 136, 1216, 10816, 96256, 856576, \ldots$ &      & \\
$9 \cdot d(n{-}1)$ & $+$ & $9 \cdot d(n{-}2)$ & $1, 1, 18, 171, 1701, 16848, 166941, 1654101, \ldots$ &      & \\
\hline
\end{tabular}
\caption{The $k$-Fibonacci-$k$ sequences $d_k(n) = k \cdot d_k(n-1) + k \cdot d_k(n-2)$ with $d_k(0) = 1$ and $d_k(1) = 1$ provide an alternate way of parameterizing the Fibonacci numbers.  
A previous pattern-avoiding result for $k=2$ is by \cite{sun2022d} where $S_n^{2}$ denotes pattern avoidance in $3$-permutations.
(Note that a $3$-permutation is not a $3$-regular permutation but rather a pair of permutations and their $3$-dimensional diagram \cite{bonichon2022baxter}.)
$^{\dag}$\textsc{Oeis} entry starts with~$0$.
$^{\ddag}$\textsc{Oeis} entry omits an initial~$1$.}
\label{tab:ksequencesk}
\end{table}

\subsection{Avoiding Patterns and Multi-Patterns}
\label{sec:final_alphaBeta}

When working with regular words it is natural to avoid \emph{multi-patterns} (i.e., patterns that have repeated symbols).
In particular, our three main theorems each included the avoidance of one multi-pattern.
More broadly, we can ask for regular word avoidance results for any subset of patterns and multi-patterns.
To aid in this discussion, let $\MULTIPATS[m]{n}$ be the set of strings of length $n$ containing each symbol in $[m]$ at least once.
For example, $\MULTIPATS[2]{3} = \{112, 121, 122, 211, 212, 221\}$ is the set of multi-patterns with $2$ distinct symbols and length $3$.
Standard patterns arise when $m=n$ (i.e., $\MULTIPATS[n]{n} = \PERMS{n}$).

One challenge in this direction of inquiry is the sheer number of possibilities.
For example, if we consider standard patterns and multi-patterns of length $n=3$, then there are $2^{12} = 4096$ different subsets of $\PERMS{3} \cup \MULTIPATS[2]{3}$.
Moreover, each of these subsets must be considered for regular words of various frequencies.
For this reason we must be conservative when trying to summarize all possible results of this type, even when restricted to patterns of length three.

One manageable goal is to understand all possibilities of avoiding one standard pattern $\alpha \in \PERMS{3}$ and one multi-pattern $\beta \in \MULTIPATS[2]{3}$.
There are $|\PERMS{3}| \cdot |\MULTIPATS[2]{3}|/4 = \frac{6 \cdot 6}{4} = 9$ distinct $(\alpha, \beta)$ pairs to considewr up to the symmetries of the square. 
Table \ref{tab:alphaBeta} summarizes the known results, including our new result in Theorem \ref{thm:b}.

When creating Table \ref{tab:alphaBeta} we observed that there are two non-isomorphic case that are currently unresolved.
Based on computational experiments involving generating and testing regular words (see \cite{williams2009loopless}), we feel comfortable in stating the following conjecture for the missing cases.
While formula \eqref{eq:123_121} in Conjecture \ref{con:123_121_and_132_212} is fairly complicated, Table \ref{tab:alphaBeta} shows that there are existing \textsc{Oeis} entries for $r=2,3,4,5$.

\begin{con} \label{con:123_121_and_132_212}
The number of $r$-regular words over $[n]$ that avoid $123$ and $121$, or $132$ and $212$, is below.
\begin{equation} \label{eq:123_121}
    |\avoid[r]{n}{123, 121}| = |\avoid[r]{n}{132, 212}| = \sum\limits_{i=0}^{n}\frac{\binom{n}{i}}{n{-}i{+}1} \binom{n{+}(r{-}1) \cdot i{-}1}{n{-}i}
\end{equation}
\end{con}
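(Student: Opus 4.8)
The plan is to handle the two asserted equalities separately: first pin down the closed form for $\avoid[r]{n}{123, 121}$, and then establish the implicit Wilf-equivalence $|\avoid[r]{n}{123,121}| = |\avoid[r]{n}{132, 212}|$. It is worth stressing at the outset that this equinumerosity is genuine and not a symmetry artifact. Under the reverse/complement group that preserves $r$-regular words, the orbit of $\{123,121\}$ is $\{\,\{123,121\},\{321,121\},\{321,212\},\{123,212\}\,\}$, which does \emph{not} contain $\{132,212\}$. Hence, unlike the collapses noted in Section \ref{sec:intro_regular}, this equivalence must be argued directly.

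For the closed form I would imitate the structural analysis behind Theorems \ref{thm:a} and \ref{thm:b}. Two elementary observations drive a decomposition. First, $121$-avoidance forces all $r$ copies of the smallest symbol to be contiguous (a contiguity statement in the spirit of Lemma \ref{consecutive2}), so every $\gamma \in \avoid[r]{n}{123,121}$ factors as $\gamma = A\,1^r\,B$ with $A$ and $B$ over $\{2,\ldots,n\}$. Second, since the block $1^r$ precedes $B$, any ascent $y<z$ inside $B$ would complete a forbidden $123$ together with a leading $1$; thus $B$ is weakly decreasing (a monotonicity statement dual to Lemma \ref{lem:order}), and the only residual constraint is that no ascent top occurring in $A$ may be exceeded by a symbol of $B$. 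I would record these two facts as standalone lemmas.

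Because the $r=1$ case already returns the Catalan numbers $\CatNum{n}$ (Theorem \ref{thm:MK}), no finite linear recurrence with constant coefficients can govern the counts, so I would encode the decomposition as a functional equation rather than a Simion--Schmidt-style recurrence. Concretely, I would introduce a catalytic variable marking the length of the leading weakly-decreasing run (the part of the word that bounds which symbols may legally appear in $B$) and set up a bivariate generating function $F(x,u)$ for which the factorization $\gamma = A\,1^r\,B$ yields an equation of the form $F = 1 + x\,\Phi(F)$. The explicit sum $\sum_i \tfrac{1}{n-i+1}\binom{n}{i}\binom{ri+(n-i)-1}{n-i}$ should then follow after a combinatorial reading of its two factors: $\binom{ri+(n-i)-1}{n-i}$ is the number of size-$(n-i)$ multisets drawn from the $ri$ slots attached to the $i$ ``large'' symbols, while $\tfrac{1}{n-i+1}\binom{n}{i}=\tfrac{1}{n+1}\binom{n+1}{i+1}$ is the Catalan/Narayana factor governing the non-crossing arrangement of these blocks. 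I would extract coefficients by Lagrange inversion applied to $F = 1 + x\,\Phi(F)$.

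I expect the crux to be two-fold. Solving the catalytic equation by the kernel method and then \emph{matching} its coefficients to the double-binomial sum is the least routine and most error-prone step; getting $\Phi$ exactly right, in particular the interaction between the $r$ slots per large symbol and the non-crossing constraint, is where the argument is most likely to break. For the Wilf-equivalence, since $\{123,121\}$ and $\{132,212\}$ are not symmetry-related, I would either repeat the decomposition for $\avoid[r]{n}{132,212}$---where $212$-avoidance makes copies of the \emph{largest} symbol contiguous and $132$-avoidance supplies the dual monotonicity---and verify that it produces the identical functional equation, or construct a direct slot-preserving bijection between the two classes. I suspect the bijective route is cleaner and would make it my preferred finish, keeping the functional-equation comparison as an independent check.
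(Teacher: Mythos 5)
There is nothing in the paper to compare your proposal against: the statement is Conjecture~\ref{con:123_121_and_132_212}, which the authors explicitly leave open, supporting it only by computer enumeration of $r$-regular words and the \textsc{Oeis} matches recorded in Table~\ref{tab:alphaBeta} for $r=2,3,4,5$. So a completed version of your argument would settle an open question, not re-derive a known proof. Judged on its own terms, what you have is a research plan rather than a proof: every decisive step --- the exact form of $\Phi$, the kernel-method solution of the catalytic equation, the Lagrange-inversion match against the double sum, and the bijection (or functional-equation comparison) needed for the Wilf-equivalence of $\{123,121\}$ and $\{132,212\}$ --- is deferred, and you say yourself that this is where the argument is most likely to break. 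Your preliminary observations are mostly sound and consistent with the paper: the orbit computation showing $\{132,212\}$ is not reverse/complement-equivalent to $\{123,121\}$ agrees with the paper's count of nine symmetry classes (the conjecture genuinely spans two of them), the contiguity of the $r$ copies of $1$ under $121$-avoidance and of $n$ under $212$-avoidance is correct, $B$ weakly decreasing is correct, and the $r=1$ specialization of the sum does give $\CatNum{n}$.

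Two concrete soft spots to repair before this plan can be executed. First, your claim that after factoring $\gamma = A\,1^r\,B$ ``the only residual constraint is that no ascent top occurring in $A$ may be exceeded by a symbol of $B$'' is false as stated: $121$-occurrences can straddle the block. If a symbol $x$ appears in both $A$ and $B$, then any symbol exceeding $x$ between those two occurrences --- for instance a larger entry at the start of the weakly decreasing $B$, or a later entry of $A$ --- produces $x \cdots y \cdots x$ with $y>x$, i.e.\ the pattern $121$. More generally $121$-avoidance forces, for \emph{every} symbol $x$, that only symbols $\leq x$ separate its copies, and your functional equation will have to encode this stronger structure, not just the smallest-symbol contiguity. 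Second, the algebraic identity you invoke is wrong: $\tfrac{1}{n-i+1}\binom{n}{i} = \tfrac{1}{n+1}\binom{n+1}{i}$, not $\tfrac{1}{n+1}\binom{n+1}{i+1}$ (the two sides of your version agree only when $i = n-i$); since this factor anchors your proposed combinatorial reading of the sum, the slot/non-crossing interpretation needs to be rebuilt on the corrected form. Neither issue is fatal to the overall strategy, but as written the proposal neither proves the closed form nor the equinumerosity, so the conjecture remains open.
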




\begin{table}
\scriptsize
\centering
\begin{tabular}{|@{\,\;\;}c@{\;}c@{\;\;\,}c@{\;\;}c@{\;}c@{\;}c@{\;}c@{\;}c@{\;}c@{\;}c@{\,\;\;}|} \hline
\rowcolor{gray!20}
\multicolumn{1}{|c}{$\alpha,\beta$} & Parameterized Sequence & Proof & $r=2$ & $r=3$ & $r=4$ & $r=5$ & $r=6$ & $r=7$ & $r=8$  \\ \hline 
\makecell{$123,112$ \\[-0.2em] $132,121$ \\[-0.2em] $132,122$} & $(r+1)$-Catalan &  \makecell{\cite{williams2023pattern} \\[-0.2em] \cite{kuba2012enumeration} \\[-0.2em] \cite{defant2020stack}} & \OEIS{A001764} & \OEIS{A002293} & \OEIS{A002294} & \OEIS{A002295} & \OEIS{A002296} & \OEIS{A007556} & \OEIS{A059967} \\ \hline 
$132,112$ & $r$-Fibonacci & Theorem \ref{thm:b} & \OEIS{A001333} & \OEIS{A003688} & \OEIS{A015448} & \OEIS{A015449} & \OEIS{A015451} & \OEIS{A015453} & \OEIS{A015454} \\ \hline 
$132,221$ & $(1,r+1)$-based Pell & \cite{hartung2024regular} & \OEIS{A001333} & \OEIS{A048654} & \OEIS{A048655} & \OEIS{A048693} & \OEIS{A048694} & \OEIS{A048695} & \OEIS{A048696} \\ \hline 
$132,211$ & $r(n-1) + 1$ & \cite{hartung2024regular} & \OEIS{A005408} & \OEIS{A016777} & \OEIS{A016813} & \OEIS{A016861} & \OEIS{A016921} & \OEIS{A016993} & \OEIS{A017077} \\ \hline 
$123,211$ & $1,r+1,0,0,0,\ldots$ & \cite{hartung2024regular} & & & & & & & \\ \hline 
\makecell{$123,121$ \\ $132,212$} & {\tiny $\displaystyle \sum\limits_{i=0}^{n}\frac{\binom{n}{i}}{n{-}i{+}1} \binom{n{+}(r{-}1) \cdot i{-}1}{n{-}i}$} & Conjecture \ref{con:123_121_and_132_212} & \OEIS{A109081} & \OEIS{A161797} & \OEIS{A321798} & \OEIS{A321799} & & & \\ \hline 
\end{tabular}
\caption{Avoiding $\alpha \in \PERMS{3}$ and $\beta \in \MULTIPATS[2]{3}$~in $r$-regular words:
known results, new results, and a conjecture.
The $|\PERMS{3}| \cdot |\MULTIPATS[2]{3}|=36$ pairs of $(\alpha,\beta)$ partition into the $\frac{36}{4}=9$ equivalence classes listed above based on the symmetries of the square (e.g., $\{(123,121), (321,121), (123,212), (321,212)\}$ is represented by $(123,121)$ above).
}
\label{tab:alphaBeta}
\end{table}

\acknowledgements

We'd like to thank the referees assisting in the preparation of this document.
This includes encouragement to prove Theorem \ref{thm:c}, which was initially a conjecture.
We would also like to thank the \textsc{Oeis}.

\nocite{*}
\bibliographystyle{abbrvnat}
\bibliography{refs}

@misc{OEIS,
    Author = {{OEIS Foundation Inc.}},
    Note = {Published electronically at \url{http://oeis.org}},
    Title = {The {O}n-{L}ine {E}ncyclopedia of {I}nteger {S}equences},
    Year = {2025}
}

@article{simion1985restricted,
  title={Restricted permutations},
  author={Simion, Rodica and Schmidt, Frank W},
  journal={European Journal of Combinatorics},
  volume={6},
  number={4},
  pages={383--406},
  year={1985},
  publisher={Elsevier}
}

@article{baril2019enumeration,
  title={{Enumeration of {\L}ukasiewicz paths modulo some patterns}},
  author={Baril, Jean-Luc and Kirgizov, Sergey and Petrossian, Armen},
  journal={Discrete Mathematics},
  volume={342},
  number={4},
  pages={997--1005},
  year={2019},
  publisher={Elsevier}
}

@article{mansour2002refined,
  title={Refined restricted permutations avoiding subsets of patterns of length three},
  author={Mansour, Toufik and Robertson, Aaron},
  journal={Annals of Combinatorics},
  volume={6},
  number={3},
  pages={407--418},
  year={2002},
  publisher={Springer}
}

@article{archer2018pattern,
  title={Pattern restricted quasi-{S}tirling permutations},
  author={Archer, Kassie and Gregory, Adam and Pennington, Bryan and Slayden, Stephanie},
  journal={arXiv preprint arXiv:1804.07267},
  year={2018}
}

@article{elizalde2024pattern,
  title={Pattern avoidance in nonnesting permutations},
  author={Elizalde, Sergi and Luo, Amya},
  journal={arXiv preprint arXiv:2412.00336},
  year={2024}
}

@article{falcon2007fibonacci,
  title={{On the Fibonacci k-numbers}},
  author={Falc{\'o}n, Sergio and Plaza, {\'A}ngel},
  journal={Chaos, Solitons \& Fractals},
  volume={32},
  number={5},
  pages={1615--1624},
  year={2007},
  publisher={Elsevier}
}

@article{gupta2012generalized,
  title={{Generalized Fibonacci Sequences}},
  author={Gupta, VK and Panwar, Yashwant K and Sikhwal, Omprakash},
  journal={Theoretical Mathematics and Applications},
  volume={2},
  number={2},
  pages={115--124},
  year={2012}
}

@article{panwar2021note,
  title={{A note on the generalized k-Fibonacci sequence}},
  author={Panwar, Yashwant},
  journal={NATURENGS},
  volume={2},
  number={2},
  pages={29--39},
  year={2021},
  publisher={Malatya Turgut Ozal University}
}

@article{baril2011classical,
  title={Classical sequences revisited with permutations avoiding dotted pattern},
  author={Baril, Jean-Luc},
  journal={{The Electronic Journal of Combinatorics}},
  volume={18},
  number={1},
  year={2011}
}

@inproceedings{gao2014tackling,
  title={Tackling sequences from prudent self-avoiding walks},
  author={Gao, Shanzhen and Chen, Keh-Hsun},
  booktitle={Proceedings of the International Conference on Foundations of Computer Science (FCS)},
  pages={1},
  year={2014},
  organization={The Steering Committee of The World Congress in Computer Science, Computer Engineering and Applied Computing (WorldComp)},
}

@unpublished{gao2011sequences,
  title={Sequences arising from prudent self-avoiding walks},
  author={Gao, S and Niederhausen, H},
  note={Unpublished manuscript available at: \url{http://world-comp.org/preproc2014/FCS2696.pdf}},
  year={2011},
}

@article{cratty2016pattern,
  title={Pattern avoidance in double lists},
  author={Cratty, Charles and Erickson, Samuel and Negassi, Frehiwet and Pudwell, Lara},
  journal={Involve, a Journal of Mathematics},
  volume={10},
  number={3},
  pages={379--398},
  year={2016},
  publisher={Mathematical Sciences Publishers}
}

@article{barcucci2006fibonacci,
  author={E. Barcucci and A. Bernini and M. Poneti},
  title={{From Fibonacci to Catalan permutations}},
  journal={arXiv preprint math/0612277},
  year={2006}
}

@article{sun2022d,
  title={On $ d $-permutations and Pattern Avoidance Classes},
  author={Sun, Nathan},
  journal={Annals of Combinatorics},
  volume={28},
  number={3},
  pages={701-732},
  year={2024}
}

@article{bonichon2022baxter,
  title={Baxter $d$-Permutations and Other Pattern-Avoiding Classes},
  author={Bonichon, Nicolas and Morel, Pierre-Jean},
  journal={Journal of Integer Sequences},
  volume={25},
  number={22.8.3},
  pages={},
  year={2022}
}

@article{bevan2015permutation,
  title={Permutation patterns: basic definitions and notation},
  author={Bevan, David},
  journal={arXiv preprint arXiv:1506.06673},
  year={2015}
}

@article{defant2020stack,
  title={Stack-sorting for words},
  author={Defant, Colin and Kravitz, Noah},
  journal={Australasian Journal of Combinatorics},
  volume={77},
  number={1},
  pages={51--68},
  year={2020}
}

@inproceedings{williams2023pattern,
  title={{Pattern Avoidance for $k$-Catalan Sequences}},
  author={Williams, Aaron},
  booktitle={Proceedings of the 21st International Conference on Permutation Patterns},
  pages={147--149},
  year={2023}
}

@inproceedings{hartung2024regular,
  title={{Regular Word Pattern Avoidance for $(1,b)$-Based Pell Numbers}},
  author={Hartung, Elizabeth and Williams, Aaron},
  booktitle={Proceedings of the 22nd International Conference on Permutation Patterns},
  pages={30--33},
  year={2024}
}

@article{knuth1968art,
  title={{The Art of Computer Programming, Vol 1: Fundamental Algorithms}},
  xnote={Section 2.2.1, Exercises 4 and 5},
  author={Knuth, Donald E},
  journal={Algorithms. Reading, MA: Addison-Wesley},
  year={1968}
}

@book{macmahon1915combinatory1,
  title={Combinatory Analysis},
  author={MacMahon, Percy Alexander},
  volume={1},
  year={1915},
  publisher={Cambridge University Press}
}

@article{gessel1978stirling,
  title={Stirling polynomials},
  author={Gessel, Ira and Stanley, Richard P},
  journal={Journal of Combinatorial Theory, Series A},
  volume={24},
  number={1},
  pages={24--33},
  year={1978},
  publisher={Elsevier}
}

@article{park1994r,
  title={The r-multipermutations},
  author={Park, SeungKyung},
  journal={Journal of Combinatorial Theory, Series A},
  volume={67},
  number={1},
  pages={44--71},
  year={1994},
  publisher={Elsevier}
}

@article{janson2011generalized,
  title={{Generalized Stirling permutations, families of increasing trees and urn models}},
  author={Janson, Svante and Kuba, Markus and Panholzer, Alois},
  journal={Journal of Combinatorial Theory, Series A},
  volume={118},
  number={1},
  pages={94--114},
  year={2011},
  publisher={Elsevier}
}

@article{kuba2012enumeration,
  title={Enumeration formulae for pattern restricted {S}tirling permutations},
  author={Kuba, Markus and Panholzer, Alois},
  journal={Discrete Mathematics},
  volume={312},
  number={21},
  pages={3179--3194},
  year={2012},
  publisher={Elsevier}
}

@article{kuba2019stirling,
  title={Stirling permutations containing a single pattern of length three.},
  author={Kuba, Markus and Panholzer, Alois},
  journal={Australasian Journal of Combinatorics},
  volume={74},
  pages={216--239},
  year={2019}
}

@inproceedings{shen2021k,
  title={A k-ary middle levels conjecture},
  author={Shen, Xi Sisi and Williams, Aaron},
  booktitle={Proceedings of the 23rd Thailand-Japan Conference on Discrete and Computational Geometry, Graphs, and Games},
  year={2021}
}

@article{mutze2016proof,
  title={Proof of the middle levels conjecture},
  author={M{\"u}tze, Torsten},
  journal={Proceedings of the London Mathematical Society},
  volume={112},
  number={4},
  pages={677--713},
  year={2016},
  publisher={Oxford University Press}
}

@article{mutze2023book,
  title={A book proof of the middle levels theorem},
  author={M{\"u}tze, Torsten},
  journal={Combinatorica},
  howpublished = {\url{https://doi.org/10.1007/s00493-023-00070-3}},
  year={2023}
}

@article{gregor2023star,
  title={{Star transposition Gray codes for multiset permutations}},
  author={Gregor, Petr and Merino, Arturo and M{\"u}tze, Torsten},
  journal={Journal of Graph Theory},
  volume={103},
  number={2},
  pages={212--270},
  year={2023},
  publisher={Wiley Online Library}
}

@inproceedings{williams2009loopless,
  title={Loopless generation of multiset permutations using a constant number of variables by prefix shifts},
  author={Williams, Aaron},
  booktitle={Proceedings of the twentieth annual ACM-SIAM symposium on discrete algorithms},
  pages={987--996},
  year={2009},
  organization={SIAM}
}

@article{babson2000generalized,
  title={Generalized permutation patterns and a classification of the {M}ahonian statistics.},
  author={Babson, Eric and Steingr{\'\i}msson, Einar},
  journal={S{\'e}minaire Lotharingien de Combinatoire [electronic only]},
  volume={44},
  pages={B44b--18},
  year={2000},
  publisher={Universit{\"a}t Wien, Fakult{\"a}t f{\"u}r Mathematik}
}

@article{pons2015lattice,
  title={A lattice on decreasing trees: the metasylvester lattice},
  author={Pons, Viviane},
  journal={Discrete Mathematics \& Theoretical Computer Science},
  number={Proceedings},
  year={2015},
  publisher={Episciences. org}
}

@article{ceballos2024weak,
  title={The $s$-Weak Order and $s$-Permutahedra {I}: Combinatorics and Lattice Structure},
  author={Ceballos, Cesar and Pons, Viviane},
  journal={SIAM Journal on Discrete Mathematics},
  volume={38},
  number={4},
  pages={2855--2895},
  year={2024},
  publisher={SIAM}
}

@article{bergeron2012higher,
  title={Higher trivariate diagonal harmonics via generalized Tamari posets},
  author={Bergeron, Fran{\c{c}}ois and Pr{\'e}ville-Ratelle, Louis-Fran{\c{c}}ois},
  journal={Journal of Combinatorics},
  volume={3},
  number={3},
  pages={317--341},
  year={2012}
}

@article{hivert2005algebra,
  title={The algebra of binary search trees},
  author={Hivert, Florent and Novelli, J-C and Thibon, J-Y},
  journal={Theoretical Computer Science},
  volume={339},
  number={1},
  pages={129--165},
  year={2005},
  publisher={Elsevier}
}

@article{novelli2020hopf,
  title={Hopf algebras of m-permutations,(m+ 1)-ary trees, and m-parking functions},
  author={Novelli, Jean-Christophe and Thibon, Jean-Yves},
  journal={Advances in Applied Mathematics},
  volume={117},
  pages={102019},
  year={2020},
  publisher={Elsevier}
}

@phdthesis{burstein1998enumeration,
  title={Enumeration of words with forbidden patterns},
  author={Burstein, Alexander},
  year={1998},
  school={University of Pennsylvania}
}

@article{egge2004231,
  title={{231-Avoiding involutions and Fibonacci numbers}},
  author={Egge, Eric S and Mansour, Toufik},
  journal={Australasian Journal of Combinatorics},
  volume={30},
  pages={75--84},
  year={2004}
}
\label{sec:biblio}

\end{document}